\renewcommand{\sf}{\mathsf}
\newcommand{\diam}{\mathrm{diam}}
\newcommand{\Mod}{\mathrm{Mod}}
\newcommand{\R}{{\mathbb R}}
\newcommand{\be}{\begin{enumerate}}
\newcommand{\ee}{\end{enumerate}}
\numberwithin{figure}{section}
\newtheorem{theorem}{Theorem}[section]
\newtheorem{lemma}[theorem]{Lemma}
\newtheorem{proposition}[theorem]{Proposition}
\newtheorem{corollary}[theorem]{Corollary}
\newtheorem{definition}[theorem]{Definition}
\newenvironment{example}[1][Example.]{\begin{trivlist}
\item[\hskip \labelsep \textsc{ #1}]}{\end{trivlist}}
\newenvironment{remark}[1][Remark.]{\begin{trivlist}
\item[\hskip \labelsep \textsc{#1}]}{\end{trivlist}}
\title{Conformal dimension and canonical splittings of hyperbolic groups}
\author{Matias Carrasco Piaggio}
\email{matias.carrasco@math.u-psud.fr}
\address{Laboratoire de Math\'ematiques d'Orsay, Universit\'e Paris-Sud 11.}
\begin{document}

\maketitle
\begin{abstract}
We prove a general criterion for a metric space to have conformal dimension one. The conditions are stated in terms of the existence of enough local cut points in the space. We then apply this criterion to the boundaries of hyperbolic groups and show an interesting relationship between conformal dimension and some canonical splittings of the group.
\end{abstract}
\begin{quote}
\footnotesize{\textsc{Keywords}: conformal dimension, local cut points, Gromov-hyperbolic groups, canonical splittings.}
\end{quote}
\begin{quote}
\footnotesize{\textsc{2010 Subject classification}: 30L10, 51F99, 20F67, 28A78.}
\end{quote}

\section{Introduction}
In this article we give sufficient conditions for a compact metric space $(X,d)$ to have conformal dimension one. The conformal dimension is a fundamental quasisymmetry invariant, introduced by Pansu in \cite{Pa}. Its original motivation is in the study of the quasiconformal structure of the boundary at infinity of a negatively curved space. For instance, any quasisymmetry invariant of the boundary of a hyperbolic group is a quasi-isometry invariant of the group. The understanding of conformal dimension has already given many applications in geometric group theory, in particular to the boundary characterization of Kleinian groups and to Cannon's conjecture \cite{BK1}. See also \cite{B,H2,H3,K,LP,MT} for other applications. 

There are different related versions of this invariant; in this article we are concerned with the \emph{Ahlfors regular conformal dimension}, a variant introduced by Bourdon and Pajot in \cite{BP}. It is defined by
$$\dim_{AR}(X,d):=\inf\left\{\dim_H(X,\theta):\theta\text{ is AR and }\theta\sim_{qs} d\right\},$$
where AR means Ahlfors regular, $\dim_H$ denotes Hausdorff dimension, and $\theta\sim_{qs} d$ means that $\theta$ is a distance on $X$ quasisymmetrically equivalent to $d$. That is, there exists an increasing homeomorphism $\eta:\R_+\to\R_+$ such that 
$$\frac{\theta(x,a)}{\theta(y,a)}\leq \eta\left(\frac{d(x,a)}{d(y,a)}\right),$$
for all distinct points $x,y,a\in X$. The conformal gauge of the metric $d$ is the quasisymmetric equivalence class of $d$. We recall that a distance $\theta$ on $X$ is Ahlfors regular of dimension $\alpha>0$ if there exists a Radon measure $\mu$ on $X$  and a constant $K\geq 1$ such that:
\begin{equation*}
r^\alpha K^{-1}\leq\mu\left(B_r\right)\leq Kr^\alpha,
\end{equation*}
for any ball $B_r$ of radius $0\leq r\leq\diam_{\theta} X$. In that case, $\mu$ is comparable to the $\alpha$-dimensional Hausdorff measure and $\alpha=\dim_H(X,\theta)$ is the Hausdorff dimension of $(X,\theta)$. We write $\dim_{AR} X$ when there is no ambiguity on the base metric $d$.

This kind of deformations can distort the Hausdorff dimension, and one can always quasisymmetrically deform the distance $d$ to obtain arbitrarily large Hausdorff dimension. The conformal dimension measures the best shape of $X$, and it is in general very difficult to compute. It is always bounded from below by the topological dimension $\dim_T X$. In particular, when the space is connected, $\dim_{AR} X\geq 1$ holds. 

This article deals with the problem of under which conditions we can quasisymmetrically deform the distance $d$ to obtain AR distances with Hausdorff dimension arbitrarily close to the topological dimension of the space. This question is of particular interest for the boundaries of hyperbolic groups. For instance, the AR conformal dimension of $\partial G$ is equal to the topological dimension $\dim_T\partial G=n\geq 1$ and is attained by a distance in the gauge if, and only if, $\partial G$ is quasisymmetrically equivalent to the Euclidean sphere $\mathbb{S}^n$ \cite{BK2}. For $n=1$, the same is true under the weaker hypothesis of $\partial G $ being homeomorphic to $\mathbb{S}^1$ \cite{CJ,Ga,Tu}. But the problem is far from being understood in the general case, even for low topological dimension. In this paper we address the case when $\dim_TX=1$.

Previously known non-trivial examples of spaces of conformal dimension one were very few, due to Bishop and Tyson \cite{BT}, Pansu (see \cite{BK1,Bu,CMT} for comments and generalizations), and Laakso (see \cite{TW} for generalizations). Although the techniques of proof are specific to each particular example, they are all related to the existence of local cut points. A point $x\in X$ is a \emph{local cut point} if there is a connected open set $x\in U\subset X$ such that $U\backslash\{x\}$ is not connected. The following condition gives a scale invariant bound on the amount of local cut points needed to disconnect the space into small pieces.

\begin{definition}[The UWS condition]\label{ubr}
We say that a connected and compact metric space $X$ has \emph{uniformly well spread} local cut points ---UWS for short--- if there exists a constant $C\geq 1$ such that for any point $x\in X$ and $r>0$, there is a finite set $P\subset B(x,r)$ verifying:
\begin{enumerate}
\item $\# P \leq C$, and
\item no connected component of $X\backslash P$ can intersect both $B\left(x,\frac{r}{2}\right)$ and $X\backslash\overline{B}(x, r)$.
\end{enumerate}
\end{definition}

We remark that one can always assume that the points of the subset $P$ are local cut points of $X$; this justifies the terminology. Recall that $X$ is a \emph{doubling} space if there exists a constant $N$ such that every ball can be covered using $N$ balls of half its radius; and, that $X$ is \emph{linearly connected} if there exists a constant $C$ such that for all $x,y\in X$, there exists a continuum $J$ containing $x$ and $y$ of diameter less than or equal to $Cd(x,y)$. This is also known as the \emph{bounded turning} condition. We obtain the following general criterion for conformal dimension one.

\begin{theorem}[General criterion for conformal dimension one]\label{tcriteregen}
Let $X$ be a doubling and compact metric space. If $X$ is linearly connected and satisfies the UWS condition, then the (AR) conformal dimension of $X$ is equal to one.
\end{theorem}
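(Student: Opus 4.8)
The plan is as follows. Since $X$ is connected, compact and non-degenerate we automatically have $\dim_{AR}X\geq\dim_TX\geq 1$, so the whole content is the reverse inequality: for every $\varepsilon>0$ one must exhibit an Ahlfors regular distance $\theta$ in the conformal gauge of $d$ with $\dim_H(X,\theta)\leq 1+\varepsilon$. I would produce such distances through the combinatorial description of the conformal gauge. Fix a small parameter $a\in(0,1)$; for each $n$ pick a maximal $a^n$-separated set $V_n\subset X$ and let $\mathcal{G}_n=\{B(v,2a^n):v\in V_n\}$, a cover of $X$ whose multiplicity is bounded in terms of the doubling constant $N_0$ and with $\#\mathcal{G}_n\leq N_0^{\,n}$. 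A weight $\rho\colon\bigsqcup_n\mathcal{G}_n\to(0,\infty)$ gives, by taking an infimum over chains of balls of the associated $\rho$-lengths, a pseudodistance $\theta_\rho$; by the combinatorial machinery for conformal gauges (Keith--Laakso, Bourdon--Kleiner, and the author's earlier work on the conformal gauge of a compact metric space) one knows that if $\rho$ is \emph{admissible} --- meaning there is $\kappa>0$ so that every combinatorial curve, at any scale, joining the inner half of a ball $B\in\mathcal{G}_n$ to its complement has $\rho$-length at least $\kappa\,\rho(B)$, which prevents $\theta_\rho$ from collapsing and forces the two-sided ball estimates --- then $\theta_\rho$ is a genuine distance quasisymmetrically equivalent to $d$, and it is Ahlfors $p$-regular as soon as $\rho$ has finite $p$-mass, $\sup_n\sum_{B\in\mathcal{G}_n}\rho(B)^p<\infty$. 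Hence it suffices to produce, for each $p=1+\varepsilon$, an admissible weight of finite $p$-mass; equivalently, to show that the combinatorial $p$-modulus of the families of curves separating a ball from its centre stays uniformly controlled.

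The idea for the weight is to contract faster than the base metric does. One takes $\rho(B)$ to be roughly a geometric factor $\lambda^{\,n}$ for $B\in\mathcal{G}_n$ (more precisely the product of contraction factors over the ancestors of $B$ in the nesting tree of the covers), with $\lambda$ chosen just above $N_0^{-1/p}$; since $\#\mathcal{G}_n\leq N_0^{\,n}$ this already makes the $p$-mass finite. The price is that such an aggressive contraction will in general collapse $\theta_\rho$, or drag it out of the conformal gauge, \emph{unless} the space is thin enough that connecting curves are genuinely pinched at every scale --- which is precisely what the UWS condition provides. The key lemma is therefore an \emph{admissibility estimate}: using that any curve joining $B(x,r/2)$ to $X\setminus\overline{B}(x,r)$ must pass through one of the $\leq C$ cut points supplied by UWS, and iterating this over all intermediate scales between the two balls one wants to separate, one shows by induction on the number of scales that the $\rho$-length of any such curve is bounded below by a fixed multiple of $\rho$ of the relevant ball. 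Linear connectedness enters here to guarantee that the curves to be controlled actually exist and have diameter comparable to the radius of the annulus, so the inductive step has something to bite on; doubling enters through the multiplicity and growth bounds above; and the uniform bound $C$ on the number of cut points per scale is exactly what makes the estimate scale-invariant, hence applicable to build a metric in the gauge.

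The step I expect to be the main obstacle is precisely this admissibility estimate, i.e.\ showing that the over-contracted weight does not collapse the metric. The difficulty is that the infimum defining $\theta_\rho$ could a priori be realised by routing through an enormous number of very small balls, each contributing a tiny $\rho$-length, and this must be excluded \emph{uniformly} in the scale; the inductive argument has to play the factor $C$ (the branches opened at each scale, working against us) against the contraction rate $\lambda$ and the combinatorial length of the curves forced by linear connectedness (working for us), and the bookkeeping must compose correctly across the unbounded hierarchy of intermediate scales. It is here that the strict inequality $p>1$ is genuinely used --- at $p=1$ no weight giving a non-collapsed metric has finite $1$-mass for a truly fractal $X$, consistent with the conformal dimension being one but in general not attained. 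Once the admissibility estimate is in place, the combinatorial criterion yields, for every $\varepsilon>0$, an Ahlfors $(1+\varepsilon)$-regular distance in the gauge, whence $\dim_{AR}X\leq 1$, and together with the trivial lower bound $\dim_{AR}X=1$.
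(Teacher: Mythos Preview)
Your overall framework --- reduce to the combinatorial $p$-moduli of annulus curve families and invoke the identification of $\dim_{AR}X$ with their critical exponent $\mathsf{p}_c$ --- is exactly the paper's (the paper quotes this identification as Theorem~\ref{theocritexp}). The gap is in the weight you propose.

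A weight that is, to first approximation, a uniform geometric factor $\lambda^{n}$ on every ball of $\mathcal{G}_n$ is essentially a snowflake of $d$ and cannot lower Hausdorff dimension: under such a weight the $\rho$-length of a curve is $\lambda^{n}$ times the number of level-$n$ balls it meets, a quantity on which UWS gives no information, so your proposed admissibility induction has nothing to grip. (The inequality is also reversed: for $\sup_n\sum_{B\in\mathcal{G}_n}\rho(B)^p<\infty$ with $\#\mathcal{G}_n\lesssim N_0^{\,n}$ one needs $\lambda\le N_0^{-1/p}$, and combining this with the admissibility constraint $\lambda\gtrsim a$ forces $p$ no lower than roughly the Assouad dimension, not $1$.) Your parenthetical about products of contraction factors along ancestors hints at a genuinely non-uniform weight, but you never say how UWS is to choose those factors, and that is exactly the missing idea.

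The paper's weight is supported \emph{only} near the cut points. For a fixed annulus $(B,2B)$ with $B=B(x,r)$, UWS first gives $\le C$ points $z$ so that every $\gamma\in\Gamma(B)$ passes through one of them, reducing to the family $\Gamma_z$ of curves through a single $z$ reaching outside $B(z,r/3)$. One then applies UWS again in each of $m$ dyadic annuli $A_i=\overline{B}(z,2^{-i}s)\setminus B(z,2^{-(i+1)}s)$, obtaining cut sets $R_{z,i}\subset A_i$ of uniformly bounded cardinality. The test weight is $\rho(A)=1/m$ on the level-$(k+n)$ balls meeting $R=\bigcup_i R_{z,i}$ and $0$ elsewhere. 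Admissibility is then \emph{immediate} --- every $\gamma\in\Gamma_z$ crosses each $A_i$ and so meets at least $m$ positively weighted balls --- while $\mathrm{Vol}_p(\rho)\lesssim m\cdot m^{-p}=m^{1-p}\to 0$ for $p>1$. The roles are thus reversed from what you anticipated: admissibility is trivial by design, and the whole content is that UWS allows the weight to be carried by $O(m)$ balls rather than by a set of cardinality growing like a power of $a^{-n}$. Linear connectedness is not used in this construction; it enters only as a hypothesis of Theorem~\ref{theocritexp}.
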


The main ingredient of the proof is an unpublished result of S. Keith and B. Kleiner stating an equality between the conformal dimension and the critical exponent associated to the combinatorial modulus (a proof of this result is given in \cite{CaPi}). A similar unpublished result to Theorem \ref{tcriteregen} was known by S. Keith and B. Kleiner; one of the motivations of the present article is to provide an accessible proof of this criterion and some of it consequences in the context of hyperbolic groups.

Note that Theorem \ref{tcriteregen} provides a large class of examples, including the examples mentioned before. A result of J. Mackay ensures that when the space $X$ verifies a quantitative analogue of the topological conditions of being locally connected and without local cut points, then the conformal dimension is greater than one \cite{M}. Both criteria provide a clear conceptual picture of the relationship between conformal dimension and local cut points.

The rest of the paper is devoted to the boundaries of hyperbolic groups. They are part of a larger class consisting of \emph{quasiselfsimilar} spaces (see Section \ref{groups}). For this class, the hypotheses of Theorem \ref{tcriteregen} are equivalent to the following topological conditions: the space $X$ is compact, connected, and verifies the \emph{well spread} local cut points condition ---WS for short: there exists a sequence of finite sets $P_n\subset X$ such that 
\begin{equation}
\delta_n=\sup\left\{\diam A:A\in\mathcal{C}_n\right\}\to 0,\text{ when } n\to+\infty,\tag{WS}
\end{equation}
where $\mathcal{C}_n$ denotes the set of connected components of $X\backslash P_n$. 

It is a remarkable fact that the topology of the boundary is reflected in the splittings of the group \cite{Bow}. For example, if $G$ is a one-ended hyperbolic group which is not virtually Fuchsian, then $\partial G$ has a local cut point if and only if $G$ splits over a virtually cyclic subgroup \cite{Bow}. This motivates the problem of characterizing the hyperbolic groups whose boundaries at infinity have conformal dimension one in terms of the properties of their canonical splittings. Recall that a splitting of $G$ is given by an action of $G$ on a simplicial tree $\sf{T}$, without edge inversions and with finite quotient \cite{DD}.

The candidate groups for $\dim_{AR}\partial G=\dim_T\partial G=1$ are essentially obtained as repetitive amalgamated products or HNN-extensions of virtually Fuchsian and finite groups over elementary subgroups. To see this, note that if $G$ is a hyperbolic group, we can decompose $G$ repeatedly over finite and virtually cyclic subgroups, until ---at least if there is no $2$-torsion--- all subgroups are finite, virtually Fuchsian, or one-ended without local cut points on the boundary \cite{DP,Va}. If $\partial G$ has no local cut points, then $\dim_{AR}\partial G> 1$ \cite{M}.\footnote{We remark that the boundary of a one-ended hyperbolic group is either homeomorphic to the Sierpi\'nski carpet or the Menger sponge if it has topological dimension one and no local cut points \cite{KaK}.} Therefore, the question can be formulated as follows: is it true that if $\partial H$ has a local cut point for every one-ended quasiconvex subgroup $H$ of $G$, then $\dim_{AR}\partial G=1$? The answer to this question seems to highly depend on how are embedded (in $G$) the one-ended quasiconvex subgroups of $G$.

We apply Bowditch's work \cite{Bow} on the structure of local cut points of the boundaries of one-ended hyperbolic groups, by relating the JSJ splitting of $G$ with the WS property, to deduce a partial answer. The following is our main result for hyperbolic groups:

\begin{theorem}[Characterization of the WS property for hyperbolic groups]\label{TEOBRGROUPES}
Let $G$ be a one-ended hyperbolic group. Then $\partial G$ satisfies the WS property if and only if either 
\begin{itemize}
\item[(i)] $\partial G$ is homeomorphic to the circle $\mathbb{S}^1$, or 
\item[(ii)] all the rigid type vertices in the JSJ splitting of $G$ are virtually free.
\end{itemize}
In particular, in this case we have $\dim_{AR}\partial G=1$.
\end{theorem}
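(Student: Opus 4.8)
The strategy is to combine Theorem~\ref{tcriteregen} (via the quasiselfsimilarity of $\partial G$, which reduces the UWS condition for conformal dimension one to the purely topological WS condition announced in the excerpt) with Bowditch's description of the local cut point structure of $\partial G$ in terms of the JSJ splitting. The equivalence is proved in two directions, and the interesting content is the geometric interpretation of WS.

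First I would dispose of the Fuchsian case: if $\partial G\cong\mathbb{S}^1$, then choosing $P_n$ to be $2n$ points cyclically distributed around the circle makes every component of $\partial G\setminus P_n$ an arc of diameter $O(1/n)$, so WS holds trivially; and $\dim_{AR}\mathbb{S}^1=1$. Conversely, if $\partial G$ is not a circle, then by Bowditch's theory the JSJ splitting of $G$ over virtually cyclic subgroups is nontrivial and its vertex groups come in three types --- \emph{hanging Fuchsian}, \emph{rigid}, and \emph{virtually cyclic} --- with the boundary of each non-elementary quasiconvex vertex subgroup embedding as a ``peripheral'' sub-continuum of $\partial G$, and the local cut points of $\partial G$ being exactly the parabolic/limit points coming from the edge groups together with those inside the Fuchsian pieces. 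The key dichotomy is: a rigid vertex group $G_v$ that is \emph{not} virtually free has one-ended boundary $\partial G_v$ \emph{without} local cut points inside its own peripheral copy, and this copy has positive diameter at every scale along a sequence of conjugates accumulating anywhere in $\partial G$ (here quasiselfsimilarity is essential).

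For the direction (ii)$\Rightarrow$WS I would argue as follows. Assume every rigid vertex group is virtually free. Build the tree of spaces associated to the JSJ action of $G$ on its JSJ tree $\sf{T}$: $\partial G$ is assembled from the boundaries of the vertex groups glued along the (finite or two-point) boundaries of the edge groups. Take $P_n$ to be the union, over a suitable finite set of vertices $v$ of $\sf{T}$ at ``combinatorial depth $n$'', of the limit points of the incident edge groups, together with, inside each hanging Fuchsian piece, the $O(n)$ cyclically distributed points as in the circle case. Since virtually free groups have totally disconnected boundary and virtually Fuchsian pieces are controlled by the circle argument, and since passing to depth $n$ in $\sf{T}$ forces the remaining vertex pieces to have uniformly small diameter in $\partial G$ (by quasiselfsimilarity / the fact that the action on $\sf{T}$ is cocompact and the translation length of edge stabilizers grows), every connected component of $\partial G\setminus P_n$ sits inside one such small piece; hence $\delta_n\to 0$. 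For the direction WS$\Rightarrow$(ii) (contrapositive), suppose some rigid vertex group $G_v$ is not virtually free but $\partial G\not\cong\mathbb{S}^1$. Then $\partial G_v$ is a non-trivial continuum without local cut points, quasiconvexly embedded in $\partial G$; using quasiselfsimilarity, copies $g\cdot\partial G_v$ appear at all locations and all scales in $\partial G$. A component of $\partial G\setminus P_n$ meeting such a copy must, by the no-local-cut-point property of $\partial G_v$ and the finiteness of $P_n$, contain a definite-diameter sub-arc of that copy relative to its own size; choosing the copy with diameter bounded below by a fixed constant then forces $\delta_n\not\to 0$, contradicting WS. Finally, ``in this case $\dim_{AR}\partial G=1$'' follows since $\partial G$ is doubling and linearly connected (standard for boundaries of one-ended hyperbolic groups --- linear connectedness is due to the absence of global cut points, Bestvina--Mess/Bowditch), and WS together with quasiselfsimilarity upgrades to UWS, so Theorem~\ref{tcriteregen} applies.

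The main obstacle, and the step requiring the most care, is the equivalence between the combinatorial ``depth $n$ in $\sf{T}$'' truncation and genuine metric smallness in $\partial G$: one must show that choosing $P_n$ from edge-group limit points at bounded combinatorial depth really does chop $\partial G$ into pieces of diameter tending to zero, which is where Bowditch's fine analysis of the peripheral structure and the quasiselfsimilar self-improvement of scales both enter. Equally delicate is verifying, in the WS$\Rightarrow$(ii) direction, that a single bad rigid piece genuinely obstructs WS: this needs the precise statement that $\partial G_v$ has no local cut points (so no bounded set $P$ can disconnect it into small parts) combined with the quasiselfsimilar reproduction of a uniformly large copy of $\partial G_v$ near any point at any scale. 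Both points rely essentially on translating Bowditch's JSJ/local-cut-point dictionary into the metric language of the WS/UWS conditions.
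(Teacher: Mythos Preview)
Your overall architecture is right, but the contrapositive direction contains a genuine error. You assert that if a rigid vertex group $G_v$ is not virtually free then ``$\partial G_v$ is a non-trivial continuum without local cut points,'' and then use this to block WS. This is false. Rigidity in Bowditch's JSJ means that $\Lambda_v$ cannot be separated \emph{in the ambient $\partial G$} by a pair of points; it says nothing about intrinsic local cut points of $\Lambda_v$. In fact Example~3.1 of the paper exhibits a rigid vertex whose stabilizer is a closed surface group, so $\Lambda_v\cong\mathbb{S}^1$ and \emph{every} pair of points is a cut pair of $\Lambda_v$. The paper's argument therefore cannot (and does not) rely on absence of local cut points in $\Lambda_v$. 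Instead it first passes to the DS splitting of $G_v$ to find a one-ended piece $G_w$, and then shows that for any loxodromic $g\in G_w$, although $\Lambda_g$ may disconnect $\Lambda_w$ internally, the resulting pieces can be \emph{reconnected by curves in $\partial G\setminus\Lambda_g$} lying in arbitrarily small punctured neighborhoods of $g^\pm$ --- precisely because rigidity forces $\Lambda_w\setminus\Lambda_g$ into a single component of $\partial G\setminus\Lambda_g$. This ``healing'' argument, with separate treatment of the case $\Lambda_w\cong\mathbb{S}^1$, is the actual obstruction to WS; your no-local-cut-point shortcut does not work. Your appeal to quasiselfsimilarity here is also unnecessary: one fixed copy $\Lambda_w$ of positive diameter already violates WS once you show no finite $P$ disconnects it into small pieces in $\partial G$.

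For the direction (ii)$\Rightarrow$WS your outline is closer to the paper but glosses over the step that carries the real content. Saying ``virtually free groups have totally disconnected boundary'' and ``go to combinatorial depth $n$'' does not by itself yield a partition of $\partial G$: you must show that for each virtually free vertex $v$ one can choose a \emph{finite} set $\sf{F}_v\subset\sf{E}_v$ of incident edges so that removing $\bigcup_{e\in\sf{F}_v}\Lambda_e$ cuts $\Lambda_v$ into small pieces \emph{and} every remaining edge limit set $\Lambda_e$ ($e\in\sf{E}_v\setminus\sf{F}_v$) lies entirely in one piece. This saturation property is what makes the vertex-by-vertex partitions assemble coherently across $\sf{T}$, and it is not automatic; the paper proves it (Proposition~\ref{smallpartition}) by passing to the DS splitting of $G_v$ over finite subgroups and using that the relevant crossing sets $P_e$ are finite precisely because the edge stabilizers there are finite. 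Your sketch does not indicate how you would obtain this coherence.
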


We refer to Section \ref{groups} for precise definitions. In the case (i) above, the WS property is trivially verified, the interesting case is (ii), but we need to split the statement in two cases because the JSJ splitting is not well defined for virtually Fuchsian groups. Note that if $G$ satisfies the hypothesis (ii) of Theorem \ref{TEOBRGROUPES}, then the AR conformal dimension is never attained by a distance in the gauge. In terms of quasiconvex subgroups of $G$, the condition (ii) above is equivalent to the following: the limit set $\Lambda_H$ of every one-ended quasiconvex subgroup $H$ of $G$ can be separated in $\partial G$ by removing a pair of points (see Section \ref{groups}). 

To clarify the meaning of condition (ii) in Theorem \ref{TEOBRGROUPES}, we can give a local version of the WS property. Let $\mathcal{E}$ be the union of all limit sets of the stabilizers of the edges of the JSJ splitting of $G$. For each $x\in \partial G$, we define the set $\mathcal{E}(x)$ to be the set of points $y\in \partial G$ such that $x$ and $y$ are in the same connected component of $\partial G\setminus P$, for any finite subset $P$ of $\mathcal{E}$, not containing $x$ nor $y$. The WS condition is equivalent to the triviality of all these sets: $\mathcal{E}(x)=\{x\}$ for all $x\in \partial G$. Moreover, the proof of Theorem \ref{TEOBRGROUPES} shows that this is always the case, unless $x$ belongs to the limit set $\Lambda_v$ of a rigid type vertex $v$, in which case $\mathcal{E}(x)$ contains the connected component of $\Lambda_v$ containing $x$. This explains why the existence of a non-virtually free rigid type vertex in the JSJ splitting of $G$ prevents the boundary $\partial G$ to satisfy the WS condition: the points in a non-trivial component of the limit set of a rigid vertex cannot be separated by a finite number of points.

\begin{remark}[Remark 1.1.]
In order to rigorously justify our approach to the characterization of hyperbolic groups of conformal dimension one, we need to show that conformal dimension is stable under maximal splittings over finite subgroups, i.e. an action of $G$ on a simplicial tree $\sf{T}$ such that the stabilizers of the edges are finite and the stabilizers of the vertices have at most one end \cite{D,S}. We call such a splitting the DS splitting of $G$. The precise statement is the following: denote by $\{v_1,\ldots,v_n\}$ a set of representatives of the orbits of the vertices of $\sf{T}$, and by $G_{v_i}$ their respective stabilizers. Then either
\begin{enumerate}
\item[(i)] $\dim_{AR}\partial G=0$ if all the $G_{v_i}$ are finite, or
\item[(ii)] $\dim_{AR}\partial G= \max\left\{\dim_{AR}\partial G_{v_i}:G_{v_i}\text{ is infinite}\right\}$ otherwise.
\end{enumerate}
This fact is well known to specialists in the field but we were unable to find it in the literature. We mention that the techniques used here allows one to prove it and a proof can be found in \cite{CapiT}. This allows us to consider the one-ended case.
\end{remark}

This fact with Theorem \ref{TEOBRGROUPES} provide us with the following class of hyperbolic groups of conformal dimension one:

\begin{corollary}
Let $G$ be a non-virtually free hyperbolic group. Suppose that for any one-ended vertex stabilizer $H$ of the DS splitting of $G$ we have either
\begin{itemize}
\item[(i)] $\partial H$ is homeomorphic to $\mathbb{S}^1$, or 
\item[(ii)] all the rigid type vertices in the JSJ splitting of $H$ are virtually free,
\end{itemize}
then the AR conformal dimension of $\partial G$ is equal to one.
\end{corollary}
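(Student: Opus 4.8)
The plan is to deduce the Corollary by combining the stability result of Remark~1.1 with Theorem~\ref{TEOBRGROUPES}. Since $G$ is not virtually free, its DS splitting has at least one one-ended vertex stabilizer, so by case (ii) of Remark~1.1 we have $\dim_{AR}\partial G=\max\{\dim_{AR}\partial G_{v_i}:G_{v_i}\text{ is infinite}\}$, the maximum being taken over a finite set of representatives of orbits of vertices. Each infinite vertex stabilizer $G_{v_i}$ is either two-ended or one-ended (it cannot be infinite-ended, being a vertex group of a maximal splitting over finite subgroups). If $G_{v_i}$ is two-ended then $\partial G_{v_i}$ is a two-point set, so $\dim_{AR}\partial G_{v_i}=0$. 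If $G_{v_i}$ is one-ended, then by hypothesis it falls into case (i) or case (ii) of the Corollary; in either case Theorem~\ref{TEOBRGROUPES} applies to $H=G_{v_i}$ and yields that $\partial G_{v_i}$ satisfies the WS property and hence $\dim_{AR}\partial G_{v_i}=1$. Taking the maximum, we get $\dim_{AR}\partial G=1$, provided at least one one-ended vertex stabilizer occurs; since $G$ is not virtually free, the DS splitting does have a one-ended vertex group, so the maximum is indeed $1$ rather than $0$.

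The one genuine subtlety I would be careful about is the edge case where all infinite vertex stabilizers of the DS splitting happen to be two-ended. A priori this could make $\dim_{AR}\partial G=0$, contradicting the claimed conclusion; but if every vertex stabilizer of a maximal splitting over finite subgroups is finite or two-ended, then $G$ is itself virtually free (it acts on a tree with virtually free --- indeed virtually cyclic or finite --- vertex stabilizers and finite edge stabilizers, hence is virtually free by a standard argument on groups acting on trees with virtually free stabilizers, e.g.\ a combination theorem of Dunwoody or the accessibility results). This is exactly why the hypothesis ``$G$ non-virtually free'' is imposed, and invoking it rules out the degenerate case and guarantees the presence of a one-ended vertex stabilizer to which the hypotheses (i)/(ii) of the Corollary apply.

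I would also record the immediate strengthening that is visible from the proof: whenever a one-ended vertex stabilizer $H$ satisfies (ii), the AR conformal dimension of $\partial H$ --- and hence of $\partial G$ --- is not attained in the gauge, as already noted after Theorem~\ref{TEOBRGROUPES}. The main obstacle is not really in the logic of the deduction, which is short, but in making sure the quoted ingredients apply verbatim: that Remark~1.1's formula covers the case where some infinite vertex groups are two-ended (they contribute $0$ and are dominated), and that Theorem~\ref{TEOBRGROUPES}'s hypotheses on $H$ match exactly conditions (i) and (ii) of the Corollary. Both checks are routine once one unwinds the definitions of the DS and JSJ splittings recalled in Section~\ref{groups}.
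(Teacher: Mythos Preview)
Your deduction is correct and follows exactly the route the paper intends: the corollary is stated immediately after Remark~1.1 as a direct consequence of that remark together with Theorem~\ref{TEOBRGROUPES}, and you have simply spelled out the two-line argument. One small simplification: by the paper's definition of the DS splitting (Remark~1.1), the vertex stabilizers already have \emph{at most one end}, so the two-ended case in your case analysis never arises and the accompanying discussion in your second paragraph is unnecessary --- the hypothesis ``$G$ non-virtually free'' is used only to ensure that not all vertex stabilizers are finite, so that case~(ii) of Remark~1.1 applies and the maximum is~$1$.
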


We end this introduction by showing how Theorem \ref{tcriteregen} and \ref{TEOBRGROUPES} can be used to provide new interesting computations of the conformal dimension. Indeed, one remarkable consequence of Theorem \ref{TEOBRGROUPES} is the existence of convex cocompact Kleinian groups for which the conformal dimension is different from a well known geometric invariant in the context of hyperbolic $3$-manifolds. More precisely, let $G$ be a one-ended convex cocompact Kleinian group (whose limit set is not the entire sphere) and let $M$ be the compact hyperbolizable $3$-manifold with boundary defined by $\left(\mathbb{H}^3\cup \Omega\right)/G$, where $\mathbb{H}^3$ denotes the real hyperbolic $3$-space and $\Omega$ is the discontinuity set of the action of $G$ on the Riemann sphere, so that $G\simeq \pi_1(M)$. Consider the collection of all complete hyperbolic $3$-manifolds $N$ which are homeomorphic to the interior of $M$. Each $N$ is uniformized by a Kleinian group $G_N$ so that $N$ is isometric to the quotient $\mathbb{H}^3/G_N$. Define $d(N)$ to be the Hausdorff dimension of the limit set of $G_N$ equipped with the Euclidean metric (note that this dimension only depends on $N$), and let
$$D(M):=\inf_N d(N).$$ 
Each $N$ can be approximated in the strong topology by a convex cocompact one \cite{NS,Oh}. Since $d(N)$ is continuous in the strong topology (see for example \cite{CT}), we always have $\dim_{AR}\partial \pi_1(M)\leq D(M)$. In \cite{CMT} the authors completely characterize the compact hyperbolizable $3$-manifolds for which $D(M)=1$. This is the case if and only if $M$ is a \emph{generalized book of I-bundles}, see also \cite{CMT} for the definition. When $M$ is a generalized book of I-bundles, there is no rigid vertex in the JSJ decomposition of $\pi_1(M)$. And therefore we obtain the following:

\begin{corollary}[$\dim_{AR}\partial \pi_1(M)<D(M)$]\label{Dvsdimconf}
Let M be a compact hyperbolizable 3-manifold as above. If all the rigid vertices of the JSJ decomposition of $\pi_1(M)$ are virtually free, and there exists at least one rigid vertex, then $\dim_{AR}\partial \pi_1(M)<D(M)$.
\end{corollary}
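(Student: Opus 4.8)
The plan is to combine three facts: (a) by hypothesis all rigid vertices of the JSJ decomposition of $\pi_1(M)$ are virtually free, so Theorem~\ref{TEOBRGROUPES}(ii) applies and yields $\dim_{AR}\partial \pi_1(M)=1$; (b) by \cite{CMT}, $D(M)=1$ if and only if $M$ is a generalized book of $I$-bundles; and (c) a generalized book of $I$-bundles has \emph{no} rigid vertex in its JSJ decomposition. Since the statement assumes there is at least one rigid vertex, $M$ cannot be a generalized book of $I$-bundles, hence $D(M)\neq 1$; combined with the general inequality $\dim_{AR}\partial \pi_1(M)\leq D(M)$ recalled just before the corollary (coming from approximation by convex cocompact manifolds \cite{NS,Oh} and continuity of $d(N)$ in the strong topology \cite{CT}), and with $D(M)\geq \dim_T\partial\pi_1(M)=1$, we conclude $1=\dim_{AR}\partial\pi_1(M)\leq D(M)$ and $D(M)\neq 1$, so the inequality is strict.

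Concretely, I would proceed as follows. First, observe that $\pi_1(M)$ is a one-ended hyperbolic group: one-endedness is part of the standing assumptions on $M$ (the group is one-ended convex cocompact Kleinian with limit set a proper subset of $\mathbb{S}^2$), and hyperbolicity follows from convex cocompactness. Second, apply Theorem~\ref{TEOBRGROUPES}: since every rigid type vertex in the JSJ splitting of $\pi_1(M)$ is virtually free, case (ii) holds (note $\partial\pi_1(M)$ need not be $\mathbb{S}^1$, but that is irrelevant — the ``In particular'' clause of Theorem~\ref{TEOBRGROUPES} gives $\dim_{AR}\partial\pi_1(M)=1$ in either case). Third, invoke \cite{CMT}: the equality $D(M)=1$ forces $M$ to be a generalized book of $I$-bundles. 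Fourth, use the structural fact — which is the key geometric input — that the JSJ decomposition of the fundamental group of a generalized book of $I$-bundles contains only surface (hanging Fuchsian) type and edge-group (virtually cyclic) type vertices, and \emph{no} rigid vertex; this is exactly the topological translation of ``book of $I$-bundles'', where the ``pages'' are $I$-bundles over surfaces-with-boundary (Seifert/Fuchsian pieces) glued along annuli (the ``bindings''). Since $M$ is assumed to have at least one rigid vertex, it is therefore not a generalized book of $I$-bundles, and hence $D(M)>1$ strictly (we already know $D(M)\geq 1$). Finally, chain the (in)equalities: $\dim_{AR}\partial\pi_1(M)=1<D(M)$.

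The only genuinely nontrivial step is the fourth one: identifying the JSJ structure of $\pi_1(M)$ when $M$ is a generalized book of $I$-bundles and checking the absence of rigid vertices. This should be read off from the definition in \cite{CMT}: a generalized book of $I$-bundles is assembled from $I$-bundles over compact surfaces with boundary (``pages'') and solid-torus-like pieces, glued along incompressible annuli; the induced graph-of-groups decomposition of $\pi_1$ over $\mathbb{Z}$-subgroups coincides with (or refines to) the JSJ decomposition, whose non-edge vertices are fundamental groups of the pages — virtually free, since a compact surface with nonempty boundary has free fundamental group — and are of Fuchsian (hanging) type, not rigid. Everything else is bookkeeping: the inequality $\dim_{AR}\partial\pi_1(M)\leq D(M)$ and the lower bound $D(M)\geq 1$ are already established in the paragraph preceding the corollary, and the equality $\dim_{AR}\partial\pi_1(M)=1$ is an immediate consequence of Theorem~\ref{TEOBRGROUPES} under the stated hypothesis.
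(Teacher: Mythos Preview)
Your proposal is correct and follows essentially the same route as the paper: the paper's argument (contained in the paragraph immediately preceding the corollary) combines Theorem~\ref{TEOBRGROUPES} to get $\dim_{AR}\partial\pi_1(M)=1$, the characterization from \cite{CMT} that $D(M)=1$ iff $M$ is a generalized book of $I$-bundles, and the observation that a generalized book of $I$-bundles has no rigid vertex in its JSJ decomposition---so the presence of a rigid vertex forces $D(M)>1$. Your write-up matches this exactly, with the added (and helpful) explicit mention of the inequality $\dim_{AR}\partial\pi_1(M)\leq D(M)$ and a more detailed justification of why generalized books of $I$-bundles have no rigid JSJ vertices.
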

That is, for such a group there is a definite gap between the infimum of Hausdorff dimensions of Kleinian deformations and the conformal dimension. See Section \ref{examplegroups} for a concrete example. The inequality $D(M)>1$ is a (not direct) consequence of Thurston's relative compactness theorem. It would be interesting to know if the conformal dimension of such a group can be obtained as the infimum of Hausdorff dimensions of the limit sets of convex cocompact actions of $G$ on CAT($-1$) spaces. See \cite[Thm 1.2]{Bou} for a similar type of result for hyperbolic buildings. 



\begin{remark}[Remark 1.2.]
Many of the ideas of this paper can also be applied to the study of conformal dimension for the repellors of dynamical systems induced by a certain class of expanding branched coverings, i.e. the topologically cxc maps, see \cite{HP1} for a precise definition. A dynamical sufficient condition for conformal dimension one also holds in this context. Nevertheless, the tools involved are quite different, so it will be explained elsewhere \cite{CaPi2}.
\end{remark}

\medskip
\textbf{Acknowledgments.} The author would like to thank Peter Haïssinsky for all his help and advice. He also thanks J. Alonso, M. Bourdon, B. Kleiner, C. Lecuire, J. Mackay and A. Sambarino for helpful discussions, comments and suggestions.

\subsection*{Outline of the paper}
In Section \ref{gen}, we prove Theorem \ref{tcriteregen}. The rest of the paper is devoted to hyperbolic groups. In Section \ref{groups} we recall Bowditch's work on the structure of local cut points on the boundaries of one-ended hyperbolic groups and the JSJ decomposition. Some basic estimates are given in Section \ref{usdef} and \ref{estdiam}. We then prove the key Proposition \ref{smallpartition} on virtually free groups in Section \ref{virtuallyfree}. We deduce Theorem \ref{TEOBRGROUPES} in Section \ref{theproof}. We end by showing how to construct explicit examples of groups verifying the hypotheses of Corollary \ref{Dvsdimconf} in Section \ref{examplegroups}.
\section{Proof of Theorem \ref{tcriteregen}}\label{gen}

The main ingredient of the proof is the combinatorial modulus, we refer to \cite{BK,Can,H1} for a general treatment. Using an appropriate sequence of finite coverings of $X$ whose mesh tends to zero, we can define combinatorial versions of conformal moduli, from which we are able to compute the conformal dimension of the space. See Remark 2.1 at the end of this section for a heuristic idea of the proof.

We start by remarking that the UWS property is equivalent to the following: there exists a function $C:(0,1)\to\R_+$ such that for all $x\in X$ and $0<s<r\leq\diam X$, there is a finite set $P\subset B(x, r)$ of cardinality bounded from above by $C(s/r)$, and such that no connected component of $X\backslash P$ can intersect both $B\left(x,s\right)$ and $X\backslash\overline{B}(x,r)$. In fact, suppose the UWS condition is satisfied, and take $0<s<r\leq\diam X$. Let 
$$n:=\left[\frac{2}{r-s}\right]+1, \text{ and }\epsilon=\frac{1}{4n}.$$
Consider the compact set $K=\overline{B}\left(x,r-n^{-1}\right)\setminus B\left(x,s+n^{-1}\right)$. For every $y\in K$, the ball $B(y,2\epsilon)$ is contained in $\overline{B}(x,r)\setminus B(x,s)$. By the doubling condition, we can cover $K$ by less than $M$ balls $B(y_i,\epsilon)$, with $y_i\in K$. Since $\diam K\leq 2r$, the constant $M$ depends only on
$$\frac{\epsilon}{r}\asymp \frac{1}{nr}\asymp \frac{1-s/r}{2}.$$
For each center $y_i\in K$, consider a set $P_i\subset B(y_i,2\epsilon)$, with $\# P_i\leq C$, given by the UWS condition. Then it suffices to take $P=\bigcup_iP_i$, which is contained in $B(x,r)$ and of cardinal number less than or equal to $M\cdot C$.

Fix $a>1$ a big enough constant and consider $\{X_i\}$ a sequence of maximal $a^{-i}$-separated sets. By maximality, the balls $B(x, a^{-i})$, where $x \in X_i$, define a covering $\mathcal{S}_i$ of $X$. We write $\mathcal{S}:=\bigcup_k\mathcal{S}_k$. The combinatorial modulus is defined as follows. Let $n,k\geq 1$, for every ball $B\in\mathcal {S}_k$, we consider the family of curves $\Gamma(B)$ in $X$ that ``join'' the ball $B$ with the complement of the ball $2B$, i.e. $\gamma\cap B\neq \emptyset$ and $\gamma\cap X \setminus 2B \neq \emptyset$. Here $2B$ denotes the ball with the same center as $B$ and twice its radius. Given $p>0$, we define the $p$-combinatorial modulus of the ``annulus'' $(B, 2B)$ at relative scale $n$ by
\begin{equation*} \label{pmodboule}
\Mod_p\left(\Gamma(B),\mathcal{S}_{k+n}\right): = \inf\limits_{\rho}\mathrm{Vol}_p(\rho),\text{ where }\mathrm{Vol}_p(\rho):=\sum\limits_{A\in\mathcal{S}_{k + n}}\rho\left(A\right)^p,
\end{equation*}
and where the infimum is taken over all weight functions $\rho: \mathcal{S}_{k + n} \to \R_+$ which are $\Gamma(B)$-admissible, i.e. for any curve $\gamma\in\Gamma(B)$, we have
\begin{equation*}\label{longcomb}
\ell_\rho\left(\gamma\right)=\sum\limits_{A\cap \gamma\neq\emptyset}\rho\left(A\right)\geq 1.
\end{equation*}
In the same way one defines the $p$-combinatorial modulus $\Mod_p(\Gamma,\mathcal{S}_n)$ for any curve family $\Gamma$ in $X$. In the sequel, we write $M_{p,n}(B)$ instead of $\Mod_p\left(\Gamma(B),\mathcal{S}_{k+n}\right)$. Therefore, for each $p> 0$, we obtain a sequence $\{M_{p,n}\}_n$, where $M_ {p, n}$ is the $p$-combinatorial modulus of $X$ at scale $n$:
$$M_{p,n}:=\sup\limits_{B\in \mathcal{S}}M_{p,n}(B).$$
In other words, the modulus $M_{p, n}$ takes into account all the ``annuli'' of $X$ with a fixed radius ratio equal to $2$. Relevant to the proof is the asymptotic behavior of the sequence $\{M_{p,n}\}_n$, and its dependence on $p$. For fixed $p$, $\{M_ {p,n}\}_n$ verifies a sub-multiplicative inequality \cite{CaPi}, and it is therefore natural to consider the \emph{critical exponent} ${\sf p}_c$ defined by
$${\sf p}_c:=\inf\left\{p> 0:\liminf_n M_{p,n}=0\right\}.$$
We derive Theorem \ref{tcriteregen} from the following result.

\begin{theorem}[\cite{CaPi} Thm 3.12]\label{theocritexp} 
Let $X$ be a doubling, linearly connected, compact metric space. Then ${\sf p}_c$ is equal to the AR conformal dimension of $X$.
\end{theorem}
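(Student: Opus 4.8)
The plan is to prove the two inequalities ${\sf p}_c\le\dim_{AR}X$ and ${\sf p}_c\ge\dim_{AR}X$ separately, the first by a direct computation and the second by a construction. The conceptual backbone is that the critical exponent ${\sf p}_c$ is a \emph{quasisymmetry invariant}: if $\theta\sim_{qs}d$, then a quasisymmetric homeomorphism carries the covering system $\{\mathcal{S}_i\}$ of $(X,d)$ to one that is combinatorially equivalent (balls go to sets squeezed between balls of controlled, comparable scales) to the covering system of $(X,\theta)$, and since $M_{p,n}$ is defined purely from the incidences between curves and covering elements, the two critical exponents agree. I would first isolate this invariance as a lemma, so that afterwards ${\sf p}_c$ may be computed using any metric in the gauge.

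For the upper bound, fix an AR metric $\theta$ of dimension $\alpha$ in the gauge and compute $M_{p,n}$ directly in $\theta$. Assign to each scale-$(k+n)$ ball $A$ meeting the annulus the weight $\rho(A)=a^{-n}$, and $\rho(A)=0$ otherwise. A curve joining $B\in\mathcal{S}_k$ to the complement of $2B$ must cross a metric annulus of width comparable to the radius of $B$, so it meets $\gtrsim a^{n}$ of the scale-$(k+n)$ balls; hence $\ell_\rho(\gamma)\gtrsim a^{n}\cdot a^{-n}=1$ and, after rescaling, $\rho$ is admissible. By Ahlfors regularity the number of scale-$(k+n)$ balls inside the annulus is $\asymp a^{n\alpha}$, so $\mathrm{Vol}_p(\rho)\asymp a^{n\alpha}a^{-np}=a^{n(\alpha-p)}$. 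Thus for every $p>\alpha$ we get $M_{p,n}\to 0$, whence ${\sf p}_c\le\alpha=\dim_H(X,\theta)$; taking the infimum over the gauge yields ${\sf p}_c\le\dim_{AR}X$.

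The lower bound ${\sf p}_c\ge\dim_{AR}X$ is the hard direction and the heart of the theorem. Fix $p>{\sf p}_c$; then $\liminf_nM_{p,n}=0$, and combined with the submultiplicative inequality for $\{M_{p,n}\}_n$ this upgrades to geometric decay $M_{p,n}\le C\lambda^n$ with $\lambda<1$. The goal is to manufacture from the near-optimal admissible weights an AR metric $\theta$ in the gauge with $\dim_H(X,\theta)\le p$; letting $p\downarrow{\sf p}_c$ then gives $\dim_{AR}X\le{\sf p}_c$. I would build a single global weight $w:\mathcal{S}\to\R_+$ by combining, multiplicatively across the scales $n,2n,3n,\dots$, the optimal weights for the successive annulus moduli, and then define the candidate metric by $\theta(x,y)=\inf_\gamma\sum_{A\cap\gamma\ne\emptyset}w(A)$, the infimum over chains of covering balls (equivalently curves) joining $x$ to $y$. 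Three properties must then be verified: (i) $\theta$ is quasisymmetric to $d$, which reduces to a Harnack-type quasi-constancy of $w$ on families of nearby balls at a fixed scale; (ii) $\theta$ is Ahlfors $p$-regular, whose mass bounds follow from the geometric decay of $M_{p,n}$ for the upper bound together with the lower modulus estimates coming from linear connectivity for the lower bound; and (iii) the construction is internally consistent, i.e. the weights used to cross successive annuli patch together so that distances add up the way the defining formula for $\theta$ requires.

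The main obstacle is precisely step (iii): turning the scale-by-scale optimal weights --- which a priori only guarantee unit $\rho$-length for crossing a single annulus --- into one globally coherent weight that is \emph{simultaneously} Ahlfors $p$-regular and quasisymmetric to $d$. This requires a delicate multiplicative gluing across all scales together with a self-improvement argument: once the modulus is small at a single scale $n_0$, the submultiplicativity must be made to propagate the smallness to all multiples of $n_0$ in a way compatible with the Harnack estimate for $w$. Here the doubling property controls the combinatorics (bounded overlap and finite cardinalities of the coverings), while linear connectivity guarantees that admissible weights bound the lengths of honest continua, so that the infimum defining $\theta$ is comparable to the combinatorial sums. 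These two hypotheses are exactly what force the construction to converge to an AR, quasisymmetrically equivalent metric of the prescribed dimension.
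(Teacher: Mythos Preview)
The paper does not prove this theorem at all: it is quoted verbatim as Theorem 3.12 of \cite{CaPi} and used as a black box in the proof of Theorem~\ref{tcriteregen}. So there is no ``paper's own proof'' to compare your proposal to.

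That said, your outline is essentially the strategy of \cite{CaPi}. The quasisymmetric invariance of ${\sf p}_c$ and the easy inequality ${\sf p}_c\le\dim_{AR}X$ via the constant weight $\rho\equiv a^{-n}$ on an Ahlfors regular representative are both correct and standard. You have also correctly identified the hard direction and its mechanism: from $M_{p,n_0}<1$ one uses submultiplicativity to get geometric decay, then glues near-optimal weights multiplicatively across scales to produce a metric weight on a hyperbolic graph (or equivalently on the nested covering $\mathcal{S}$), and finally checks that the resulting distance is in the gauge and Ahlfors regular of dimension at most $p$. Your step~(iii) is precisely where the work lies in \cite{CaPi}.

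One caution: your sketch leaves implicit a genuine technical point. The optimal weights for distinct annuli at the same relative scale $n$ need not be comparable to one another, so a naive multiplicative gluing does not immediately yield the Harnack-type estimate you invoke in step~(i). In \cite{CaPi} this is handled by first replacing the raw optimal weights by a controlled family (via an averaging or normalization procedure over the incidence graph of $\mathcal{S}$) before gluing; linear connectivity is used both to ensure nondegeneracy of the resulting pseudodistance and to control the lower mass bound. Your proposal gestures at this but does not supply the mechanism, so as written it is an accurate road map rather than a proof.
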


In the sequel we prove that ${\sf p}_c=1$. For this, we will prove that $M_{p,n}\lesssim\eta_n^{p-1}$ for all $p>1$, where $\eta_n$ is a sequence of positive real numbers that tends to zero as $n\to\infty$. 

Let $x\in X_k$ and write $r=a^{-k}$. Recall that $\Gamma (B)$ is the family of curves of $X$ joining $B(x,r)$ and $X\setminus B\left(x, 2r\right)$. We denote by $P=P(B)\subset 3/2\cdot B$ the finite set given by the UWS property, which verifies that $\gamma\cap P\neq\emptyset$ for any $\gamma\in\Gamma(B)$, and that $\#P\leq C$, where $C$ is a uniform constant. We have $\Gamma(B)\subset \bigcup_{z\in P}\Gamma_z$, where $\Gamma_z$ is the family of curves $\gamma$ in $X$ such that $z\in\gamma$ and $\gamma\cap X\setminus B(z,s)$, where $s=r/3$. In particular, we obtain
$$M_{p,n}(B)\leq \sum\limits_{z\in P}\Mod_p\left(\Gamma_z,\mathcal{S}_{k+n}\right)\leq C\max\limits_{z\in P}\left\{\Mod_p\left(\Gamma_z,\mathcal{S}_{k+n}\right)\right\}.$$
We must bound from above the combinatorial modulus of $\Gamma_z$. We take $m\geq 1$, and for $i=0,\ldots m-1$, we set
$$A_i=\overline{B\left(z,2^{-i}s\right)}\setminus B\left(z,2^{-(i+1)}s\right).$$
By the UWS property, for each $z\in P$ and each $i=0,\ldots,m-1$, there exists a finite set $R_{z,i}\subset A_i$, with cardinal number bounded from above by a universal constant $K$, such that any curve $\gamma$ of $X$ verifying $\gamma\cap B(z,2^{-(i+1)}s)\neq\emptyset$ and $\gamma\cap B(z,2^{-i}s)\neq\emptyset$, must pass by $R_{z,i}$. For $n\geq 1$ such that $a^{-(k+n)}\leq 2^{-m}s$, consider the set
$$U=\left\{A\in\mathcal{S}_{k+n}: A\cap R\neq\emptyset\right\},$$
where
$$R:=\bigcup_{i=0}^{m-1}R_{z,i}.$$
We define $\rho:\mathcal{S}_{k+n}\to\R_ +$ by
\begin{equation}\label{idea}\rho(A):=\begin{cases}\frac{1}{m} &\text{if } A\in U\\
0 &\text{otherwise.}
\end{cases}\end{equation}
On the one hand, since any curve $\gamma\in\Gamma_z$ must cross each $A_i$ for $i=0,\ldots,m-1$, we have that $\rho$ is $\Gamma_z$-admissible. On the other hand, there exists a constant $M$, which depends only on the doubling constant of $X$ and $a$, such that 
$$\#U\leq M\cdot\#\left\{\bigcup_{i=0}^{m-1}R_{z,i}\right\}\leq (MK)\cdot m:=K'm.$$ 
Therefore,
$$\mathrm{Vol}_p(\rho)=\frac{\#U}{m^p}\leq \frac{K'}{m^{p-1}}.$$
This shows that $\Mod_p\left(\Gamma_z,\mathcal{S}_{k+n}\right)\leq K'm^{1-p}$. Thus, it suffices to take
$$\eta_n:=\left[\frac{n\log a-\log 3}{\log 2}\right]^{-1}.$$
This ends the proof of Theorem \ref{tcriteregen}.

\begin{figure}
\centering
\setlength{\unitlength}{1cm}
\begin{picture}(11,5)
\includegraphics[width=11cm]{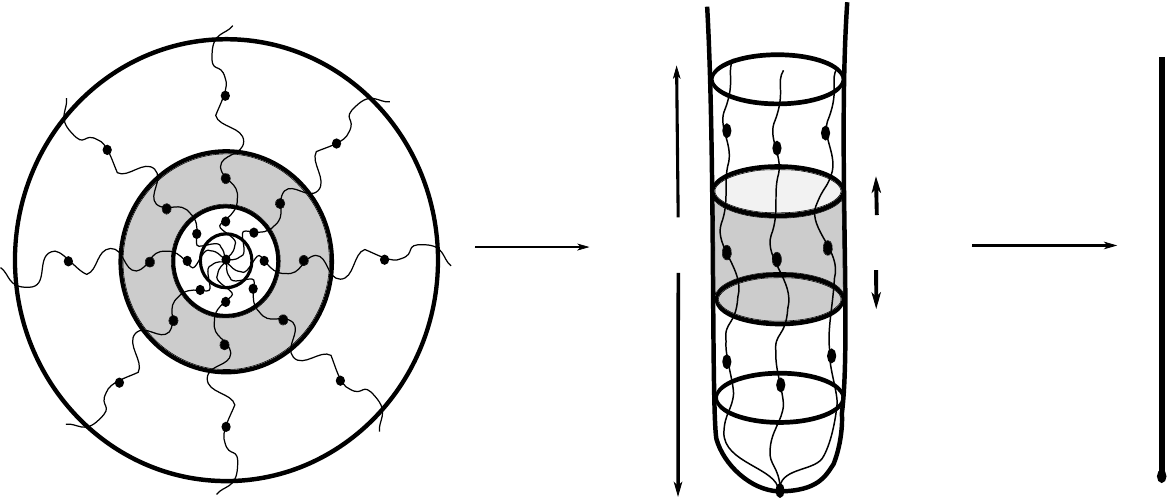}
\put(-2.9,2.35){\footnotesize $s'/m$}
\put(-4.7,2.3){\footnotesize $s'$}
\put(-1.8,2){\footnotesize $m\to +\infty$}
\put(-6.5,2){\footnotesize $d\mapsto \theta$}
\put(-0.3,0){\footnotesize $z$}
\put(-8,0){\footnotesize $B(z,s)$}
\put(-7.8,2.6){\footnotesize $A_i$}
\end{picture}
\caption{Idea of the proof of Theorem \ref{tcriteregen}. The points represent the set $R$ given by the UWS condition applied to each annulus $A_i$. Some curves passing through the annuli are drawn.\label{fig:idea}}
\end{figure}

\begin{remark}[Remark 2.1.]
We give an heuristic idea of what is the shape of the quasisymmetric deformations of the original distance $d$ on $X$ induced by the combinatorial modulus. Intuitively, the dimension of a distance $\theta$ on $X$ is $\leq p$ if the cardinal of a maximal $\epsilon r$-separated set in a ball of radius $r$ is bounded from above by $C\epsilon^{-p}$. Using the same notations as in the proof, the weights defined by (\ref{idea}) produce a distance $\theta$ for which the balls of the covering $\mathcal{S}_{k+n}$ which do not intersect $R$ have a very small diameter, and the balls in $U$ have diameter almost equal to $1/m$ times the diameter $s'$ of $B(z,s)$. This implies that the set of centers of the balls in $U$ (i.e. intersecting the the set $R$) becomes an almost $s'/m$-separated set in $B(z,s)$ for the distance $\theta$. Its cardinal number is bounded from above by $K'm$. This shows why the dimension of $\theta$ is close to one, and tends to one when $m\to +\infty$. See Figure \ref{fig:idea}.
\end{remark}

\begin{remark}[Remark 2.2]
The UWS condition fails when there is an infinite number of definite diameter pairwise disjoint curves in the space. Therefore, it is not difficult to find examples of spaces of conformal dimension equal to $1$, but not verifying the UWS property. However, the LC and UWS assumptions are in some sense optimal. One can construct an example of a compact connected space which verifies WS and LC, but not UWS, and has AR conformal dimension strictly bigger than $1$; and, also a space which verifies UWS, but not LC, and of AR conformal dimension strictly bigger than $1$. See \cite[Chapter 5]{CapiT}.
\end{remark}

\section{The WS property for hyperbolic groups}\label{groups}

In this section we give the proof of Theorem \ref{TEOBRGROUPES}. Let $G$ be a Gromov-hyperbolic group and fix $\sf{C}$ a Cayley graph of $G$ with respect to some finite generating set $S$. Equip $\sf{C}$ with the length distance which makes each edge of $\sf{C}$ isometric to the unit interval. We can identify the boundary of $G$ with $\partial \sf{C}$. In the sequel, we fix a visual distance $\theta$ on $\partial \sf{C}$ (of visual parameter $\epsilon>0$ and base point at the identity). This metric is Ahlfors regular of dimension $h/\epsilon$ whenever $G$ is non-elementary; here $h$ is the volume entropy of the action of $G$ on $\sf{C}$ \cite{Coo}. In particular, $\left(\partial \sf{C},\theta\right)$ is a doubling uniformly perfect compact metric space. Moreover, $\partial \sf{C}$ is a quasiselfsimilar space \cite[Prop. 4.6]{H2}, this is also known as the Sullivan \emph{conformal elevator principle} \cite{Sul}. If $G$ is one-ended, then $\partial C$ is automatically locally connected, and by self-similarity it verifies LC \cite{BK4}. Therefore, for the boundary of a one-ended hyperbolic group, the hypotheses of Theorem \ref{tcriteregen} are equivalent to the WS property.

Let $G$ be a one-ended hyperbolic group which is not a cocompact virtually Fuchsian group, and denote by ${\sf T}$ the Bass-Serre tree associated to the JSJ splitting of $G$ (see Section \ref{JSJ} for the precise definition). In the sequel, we denote by $\sf{T}_0$ and by $\sf{T}_1$ the vertices and the edges of ${\sf T}$. We equip $\sf{T}$ with the length distance that makes each edge isometric to the unit interval. Given $v\in \sf{T}_0$ and $e\in \sf{T}_1$, we denote by $G_v$ and by $G_e$ the stabilizers of $v$ and $e$ respectively. We also let $\Lambda_v$ and $\Lambda_e$ be the limit set of $G_v$ and $G_e$ respectively. Thus, if $e\in \sf{T}_1$ and $v,w\in \sf{T}_0$ are the endpoints of $e$, then $G_e=G_v\cap G_w$. 

The main result of Section \ref{virtuallyfree} is Proposition \ref{smallpartition}. This is the key step in the proof of Theorem \ref{TEOBRGROUPES}. We prove that if $v$ is a virtually free vertex of the JSJ splitting of $G$, then for a given scale $\delta>0$, we can find a finite set $\sf{F}_v$ of edges of $\sf{T}$ incident to $v$, such that: 
\begin{itemize}
\item removing their limit sets $\Lambda_e$ from $\Lambda_v$ produces a partition of $\Lambda_v$ into small pieces ($\diam\leq\delta$), and
\item with the property that if $e$ is an edge incident to $v$ not in $\sf{F}_v$, then $\Lambda_e$ is contained in exactly one piece of this partition; i.e. $\Lambda_e$ does not ``connect'' different pieces. 
\end{itemize}
This consist in an inductive step of the proof.  Section \ref{theproof} contains the proof of the theorem, which is summarized in Proposition \ref{brgroupes}. The idea is to construct a big enough finite subtree $\sf{T}_\delta$ of $\sf{T}$, such that removing the limit sets of the vertices of $\sf{T}_\delta$ leaves only pieces of diameter less than $\delta$. Then we can apply the previous lemma to decompose each one of the vertices of $\sf{T}_\delta$ in a coherent fashion, producing the desired partition of the boundary of $G$. We invite the reader to look at the example in Section \ref{examplegroups}, and also Example 3.1, before going into the details of the proof.

\subsection{The JSJ splitting and local cut points}\label{JSJ}

The aim of this section is to recall the basic properties of Bowditch's JSJ splitting for hyperbolic groups. By a \emph{virtually Fuchsian} group we mean a non-elementary hyperbolic group that acts properly discontinuously and by isometries on the real hyperbolic plane $\mathbb{H}^2$. We say that the group is \emph{cocompact} virtually Fuchsian if the action is. The action is not necessarily faithful, but its kernel is finite. If $H$ is a virtually Fuchsian group, we say that $H$ is \emph{convex cocompact} if the action is cocompact on its convex core. Recall that its convex core is the minimal $H$-invariant closed convex subset of $\mathbb{H}^2$. In this case, the \emph{peripheral} subgroups of $H$ are the stabilizers of the boundary connected components of its convex core.

Bowditch's JSJ decomposition theorem \cite[Thm 0.1 and Thm 5.28]{Bow} asserts that there exists a minimal simplicial action of $G$ on a simplicial tree $\sf{T}$, without edge inversions and with finite quotient $\sf{T}/G$, whose edge stabilizers are virtually cyclic, and the vertices of $\sf{T}$ are of three types:
\medskip
\begin{itemize}
\item[($\sf{T}_C$)] \emph{Virtually cyclic}: the stabilizer is a maximal virtually cyclic subgroup of $G$. Its valence in $\sf{T}$ is finite and at least two. The limit set $\Lambda_v$ corresponds to a pair of local cut points of $\partial\sf{C}$.
\medskip
\item[($\sf{T}_S$)] \emph{Surface or MHF}: the stabilizer is a quasiconvex, non-elementary virtually free convex cocompact Fuchsian subgroup of $G$. Its peripheral groups are precisely the stabilizers of its incident edges. They are maximal in the following sense: if $H$ occurs as the vertex group of a finite splitting of $G$ over virtually cyclic subgroups, in such a way that $H$ admits a convex cocompact Fuchsian action on $\mathbb{H}^2$ whose peripheral groups are the stabilizers of its incident edges, then $H$ is contained in a subgroup of surface type.
\medskip
\item[($\sf{T}_R$)] \emph{Rigid}: the stabilizer is quasiconvex, non-elementary and not of Surface type, and admits no decomposition over virtually cyclic subgroups relative to the stabilizers of its incident edges. The limit set of a rigid type vertex is characterized by the following property: $\Lambda_v$ is a maximal closed subset of $\partial\sf{C}$ with the property that it cannot be separated by two points of $\partial \sf{C}$. 
\end{itemize}

We refer to \cite{Sho,BH} for the definition and basic properties of quasiconvex subgroups. Partition the vertices of the JSJ tree $\sf{T}$ as $\sf{T}_C\cup \sf{T}_S\cup \sf{T}_R$, which are respectively the vertices of virtually cyclic, surface and rigid type. These types are mutually exclusive, are preserved by the action of $G$, and two adjacent vertices are never of the same type. The splitting is quasi-isometry invariant and the maximality of the splitting can be formulated as follows: any local cut point of $\partial \sf{C}$ is contained in the limit set of the stabilizer of a vertex in $\sf{T}_C\cup\sf{T}_S$. 

To construct the simplicial tree $\sf{T}$, Bowditch uses the structure of local cut points of $\partial \sf{C}$. The limit set $\Lambda_e$ of $e\in  \sf{T}_1$ consists of exactly two points which are fixed points of any element of infinite order in $G_e$. The limit set $\Lambda_e$ separates the boundary of $\sf{C}$, and the two points of $\Lambda_e$ are local cut points. There are only a finite number of connected components of $\partial \sf{C}\setminus\Lambda_e$, and every component has $\Lambda_e$ as frontier.

The vertices of surface type $\sf{T}_S$ are virtually free (with a peripheral structure) and the limit set $\Lambda_v$ is a Cantor set of local cut points. The stabilizer of $\Lambda_v$, for the action of $G$ on $\partial \sf{C}$, is the same as the stabilizer of $v$ for the action of $G$ on $\sf{T}$, i.e. $G_v$. Let us explain in more detail the natural bijection between the edges of $\sf{T}$ incident to $v$, and the peripheral subgroups of $G_v$. The limit set of the peripheral subgroups are the ``jumps'' of the Cantor set $\Lambda_v$, which can be defined using the embedding of $\Lambda_v$ into $\mathbb{S}^1$ given by the action of $G_v$ on $\mathbb{H}^2$. Denote by $J_v$ the set of jumps. For $\tau=\{x,y\}\in J_v$, denote by $\mathcal{C}_\tau$ the set of connected components of $\partial \sf{C}\setminus \tau$. Then the set of connected components $\mathcal{C}_v$ of $\partial \sf{C}\setminus \Lambda_v$ is given by
\begin{equation}\label{compcantor}\mathcal{C}_v=\bigcup_{\tau\in J_v}\mathcal{C}_\tau.\end{equation}
This means that two points are separated by $\Lambda_v$ if and only if they are separated by a jump. Therefore, in order to study the WS property it is enough to consider the local cut points which are in the limit sets of the edges of $\sf{T}$:
$$\mathcal{E}:=\bigcup_{e\in \sf{T}_1}\Lambda_e.$$

\begin{example}[Example 3.1]
Let us finish our discussion of the JSJ splitting by giving a simple example of a rigid type vertex. Consider two isometric copies of a closed hyperbolic surface $S$ and $S'$. Let $\gamma$ be a closed geodesic in $S$, and suppose that $\gamma$ is filling; i.e. the connected components of $S\setminus \gamma$ are simply connected. Denote by $\gamma'$ the copy of $\gamma$ in $S'$. Consider the complex 
$$X=S\sqcup S'/\gamma\sim\gamma',$$
and let $G=\pi_1(X)$. Then the quotient graph $\sf{T}/G$ of the JSJ splitting of $G$ has three vertices: $\langle\gamma\rangle\in\sf{T}_C$, $\pi_1(S)\in\sf{T}_R$ and $\pi_1(S')\in\sf{T}_R$. To see this, note that since $\gamma$ is a filling geodesic, given any to points $x,y\in \Lambda_{\pi_1(S)}$, there exists a conjugate of $\langle\gamma\rangle$ whose fixed points $\{u,v\}$ separate $\{x,y\}$ in $\Lambda_{\pi_1(S)}$. Since $\partial \sf{C}$ contains a copy of $\Lambda_{\pi_1(S')}$ passing through $\{u,v\}$, we see that $\Lambda_{\pi_1(S)}$ cannot be separated in $\partial \sf{C}$ by a pair of points. Note that $\partial\sf{C}$ does not satisfy the WS property.
\end{example}

\subsubsection{Some useful definitions}\label{usdef}

The following definition will be important in relating the action of $G$ on $\sf{T}$ and the boundary of $\sf{C}$. Fix an arbitrary vertex $v_0\in \sf{T}_0$. We define an equivariant Lipschitz continuous function $\phi: \sf{C}\to \sf{T}$ by setting $\phi(1)=v_0$ and $\phi(g)=g\cdot v_0$ for all $g\in G$. This determines $\phi $ on $G$, the vertices of $\sf{C}$. If $\sf{I}=(g,h)$ is an edge of $\sf{C}$, we extend $\phi$ so that it sends $\sf{I}$ linearly to the unique arc of $\sf{T}$ that joins $\phi(g)$ and $\phi(h)$. The image of an edge of $\sf{C}$ is either a vertex or an arc of $\sf{T}$. Thus, by minimality $\phi$ is surjective.

\noindent Let us introduce some notation:

\begin{itemize}
\item If $A$ is a closed subset of $\sf{C}$, we denote by $\Lambda_A$ the set $\overline{A}\setminus A$, where the closure is taken in the space $\sf{C}\cup \partial \sf{C}$. 
\item Let $e\in \sf{T}_1$, we denote by $m_e$ its mid-point and $\sf{M}_e:=\phi^{-1}(m_e)$.
\item For $v\in  \sf{T}_0$, let $\sf{T}^v_e$ be the closure of the connected component of $\sf{T}\setminus\{m_e\}$ which does not contain $v$, and let $\sf{C}^v_e=\phi ^{-1}\left(\sf{T}^v_e\right)$. For $v=v_0$, we simply write $\sf{T}_e$ and $\sf{C}_e$; we also denote by $\sf{T}^-_e$ the the other component of $\sf{T}\setminus\{m_e\}$ and $\sf{C}^-_e=\phi ^{-1}(\sf{T}^-_e)$.

\item We denote by $\sf{E}_v$ the set of edges of $\sf{T}$ which are incident to $v$.
\item For a vertex $v\in \sf{T}_0$, let $\sf{S}_v$ be the star-shaped connected subset of $\sf{T}$ formed by $v$ and all the segments joining $v$ to $m_e$ with $e\in \sf{E}_v$. Let $\sf{M}_v:=\phi^{-1}(\sf{S}_v)$. Note that $\sf{M}_e$ and $\sf{M}_v$ are respectively $G_e$ and $G_v$ invariant, and for every edge $e\in \sf{E}_v$, we have $\sf{M}_e\subset \sf{M}_v$.
\item For every vertex $v\neq v_0$ of $\sf{T}$, there exists exactly one edge $e^*\in \sf{E}_v$ such that $\sf{C}^v_{e^*}=\sf{C}^-_{e^*}$, i.e. such that $\sf{C}^v_{e^*}$ contains $v_0$. Let $\sf{E}^*_v=\sf{E}_v\setminus\{e^*\}$.
\end{itemize}

\noindent Finally, we say that a function $f:A\to[0,\infty)$ defined on a set $A$, tends to infinity, $f(a)\to\infty$ for $a\in A$, if for all $N>0$, the set $\{a\in A: f(a)\leq N\}$ is finite.

\subsubsection{A general estimate for the diameter of $\Lambda_{\sf{C}_e}$}\label{estdiam}

The estimates that follow are essentially proved in \cite{Bow,Bow2}. Our aim is to deduce some topological properties of $\partial \sf{C}$ from the action of $G$ on $\sf{T}$. These estimates are true in general for any quasiconvex splitting of $G$. We will also use them in the next section for splittings over finite subgroups.

Note that up to conjugacy there is only a finite number of groups $G_e$, but that the entire family of edge stabilizers $\{G_e:e\in {\sf T}_1\}$ is not uniformly quasiconvex. Nevertheless, for each edge $e\in  \sf{T}_1$, the set $\sf{M}_e$ is at finite Hausdorff distance from $G_e$. Moreover, the sets $\{\sf{M}_e: e\in  \sf{T}_1\}$ are uniformly quasiconvex (say of constant $K_{qc}$) and therefore
\begin{equation}\label{estidiamqe}
\diam\ \Lambda_e=\diam\ \Lambda_{\sf{M}_e}\lesssim\exp\left(-\epsilon\mathrm{dist}\left(1,\sf{M}_e\right)\right).
\end{equation}
This allows to prove the following \cite[Section 1]{Bow}. 

\begin{lemma}\label{estiqvze}
The sets $\{\sf{C}_e: e\in  \sf{T}_1\}$ are uniformly quasiconvex of constant $K_{qc}$. In particular,
\begin{equation}\label{estidiamze}
\diam\ \Lambda_{\sf{C}_e}\lesssim\exp\left(-\epsilon\mathrm{dist}\left(1,\sf{C}_e\right)\right).
\end{equation}
Moreover,
\begin{equation}\label{distzealuno}
\mathrm{dist}\left (1,\sf{C}_e\right)=\mathrm{dist}\left(1,\sf{M}_e\right)\to\infty\text{ for } e\in  \sf{T}_1.
\end{equation}
\end{lemma}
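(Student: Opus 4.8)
\textbf{Proof plan for Lemma \ref{estiqvze}.}

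The plan is to derive the uniform quasiconvexity of the sets $\sf{C}_e$ from that of the sets $\sf{M}_e$, and then deduce the diameter estimate \eqref{estidiamze} from \eqref{estidiamqe} by a standard distance-versus-diameter argument for quasiconvex sets in a hyperbolic space. First I would record the geometric relationship between $\sf{C}_e$ and $\sf{M}_e$. By definition $\sf{C}_e = \phi^{-1}(\sf{T}_e)$, and $\sf{M}_e = \phi^{-1}(m_e)$, where $\phi:\sf{C}\to\sf{T}$ is the equivariant Lipschitz map; since $m_e$ separates $\sf{T}_e$ from its complement, every geodesic in $\sf{C}$ from a point of $\sf{C}_e$ to a point of $\sf{C}\setminus\sf{C}_e$ must have its $\phi$-image cross $m_e$, so (using that $\phi$ is Lipschitz, hence moves points a bounded amount along edges) the geodesic must pass within a bounded distance of $\sf{M}_e$. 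This shows $\sf{M}_e$ "coarsely separates" $\sf{C}$ along $\sf{C}_e$. Conversely $\sf{M}_e\subset\sf{C}_e$ up to the bounded error coming from the fact that $\phi$ need not send a vertex exactly to $m_e$.

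Next I would prove uniform quasiconvexity of $\{\sf{C}_e\}$. The clean way is: a quasiconvex subset $Z$ of a hyperbolic space is, up to bounded Hausdorff distance, the coarse convex hull of its limit set, and more concretely a set is $K$-quasiconvex iff geodesics between its points stay $K$-close to it. Take $x,y\in\sf{C}_e$ and a geodesic $[x,y]$. If $[x,y]$ stays in $\sf{C}_e$ we are done; otherwise it exits and re-enters, so it crosses $\sf{M}_e$ (coarsely) at least twice, at points $p,q$ within bounded distance of $\sf{M}_e$. The sub-geodesic $[p,q]$ has endpoints a bounded distance from the uniformly quasiconvex set $\sf{M}_e$, hence stays uniformly close to $\sf{M}_e\subset \sf{C}_e$ (up to bounded error); and the two outer pieces $[x,p]$ and $[q,y]$ stay in $\sf{C}_e$ by the same separation argument applied to $x,p$ (resp. $q,y$), since any excursion out of $\sf{C}_e$ would again have to cross $\sf{M}_e$, reducing to the previous case. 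Collating these estimates gives a uniform quasiconvexity constant $K_{qc}$ for all $\sf{C}_e$ simultaneously, using that the $\sf{M}_e$ have a common constant. (This is precisely the argument indicated in \cite[Section 1]{Bow}.)

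Once $\{\sf{C}_e\}$ is uniformly quasiconvex, the diameter bound \eqref{estidiamze} is immediate from the visual metric: for a $K_{qc}$-quasiconvex set $Z$ in a $\delta$-hyperbolic space, any two points $\xi,\xi'\in\Lambda_Z$ are joined by a bi-infinite geodesic staying within $K_{qc}+O(\delta)$ of $Z$, hence within that distance of the point of $Z$ nearest to $1$; so the Gromov product $(\xi|\xi')_1$ is at least $\mathrm{dist}(1,Z)$ up to an additive constant, and the visual metric of parameter $\epsilon$ gives $\theta(\xi,\xi')\lesssim\exp(-\epsilon\,\mathrm{dist}(1,Z))$. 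Applying this with $Z=\sf{C}_e$ yields \eqref{estidiamze}, and with $Z=\sf{M}_e$ recovers \eqref{estidiamqe}. Finally, for the last assertion \eqref{distzealuno}: $\mathrm{dist}(1,\sf{C}_e)=\mathrm{dist}(1,\sf{M}_e)$ up to the bounded error above, and this tends to infinity in the sense defined at the end of Section \ref{usdef} because for each $N$ only finitely many edges $e$ of $\sf{T}$ lie within $\phi$-distance $N$ of $v_0$ — the action of $G$ on $\sf{T}$ is cocompact, $\phi$ is Lipschitz and proper in the appropriate coarse sense, so $\{e:\mathrm{dist}(1,\sf{M}_e)\le N\}$ meets only finitely many $G$-orbits of edges and, within the relevant ball, only finitely many edges. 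The main obstacle I anticipate is making the "coarse separation" step fully rigorous: controlling exactly how far a geodesic of $\sf{C}$ can stray from $\sf{M}_e$ given only that its $\phi$-image crosses $m_e$, which requires care with the Lipschitz constant of $\phi$ and with the fact that $\phi$ is defined by linear interpolation on edges so that $\phi^{-1}(m_e)$ could a priori be "thin" or disconnected; handling this is exactly where one invokes that $\sf{M}_e$ is at finite Hausdorff distance from the subgroup $G_e$.
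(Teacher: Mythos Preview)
Your proposal is correct and follows the standard route that the paper itself defers to via the citation \cite[Section 1]{Bow}; the paper gives no argument of its own beyond recording that the uniform quasiconvexity of the $\sf{M}_e$ ``allows to prove'' the lemma. Two minor simplifications: since $\sf{T}_e$ is the \emph{closure} of the half-tree, one has $m_e\in\sf{T}_e$ and hence $\sf{M}_e\subset\sf{C}_e$ exactly (no bounded error), and because $\phi$ is continuous any geodesic that exits $\sf{C}_e$ literally meets $\sf{M}_e$, not merely a neighborhood of it --- so the ``coarse separation'' obstacle you flag is not an obstacle at all, and the excursion argument reduces to: each maximal sub-arc of $[x,y]$ outside $\sf{C}_e$ has both endpoints in $\sf{M}_e$, hence stays $K_{qc}$-close to $\sf{M}_e\subset\sf{C}_e$. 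Likewise the exact equality in \eqref{distzealuno} follows because $\phi(1)=v_0\notin\sf{T}_e$, so $1\notin\sf{C}_e$, and the nearest point of $\sf{C}_e$ to $1$ lies on $\partial\sf{C}_e\subset\sf{M}_e$.
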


\begin{remark}[Remark 3.1]
It also holds that $\sf{M}_v$ is at finite Hausdorff distance from $G_v$. Moreover, the set $\{\sf{M}_v:v\in  \sf{T}_0\}$ is a locally finite cover of $\sf{C}$ by $K_{qc}$-quasiconvex sets, and $\Lambda_{\sf{M}_v}=\Lambda_v$.
\end{remark}

\subsection{The key step of the proof}\label{virtuallyfree}

The purpose of this section is to prove a key fact on virtually free groups, Proposition \ref{smallpartition}, for the proof of Theorem \ref{TEOBRGROUPES}. We suppose in this section  that $G$ is a virtually free group. By \cite{D,S} the group $G$ acts on a simplicial tree $\sf{Z}$, so that the edge and vertex stabilizers are all finite subgroups of $G$. We use the same notation as in the previous section, in particular, we consider the equivariant map $\phi:{\sf C}\to {\sf Z}$ as before. In this case, there is a bijection between $\partial \sf{C}$ and $\partial \sf{Z}$; i.e. for every geodesic ray $\gamma$ in $\sf{C}$, the image $\phi(\gamma)$ is unbounded in $\sf{Z}$. Let $\partial \sf{Z}^\pm_e$ be the set of rays of $\sf{Z}$ which are eventually contained in $\sf{Z}^\pm_e$. Note that in this case $\Lambda_e=\emptyset$ for any edge $e\in  \sf{Z}_1$, so the sets $\Lambda_{\sf{C}_e}$ are open and closed in $\partial \sf{C}$.

We need one more estimate in this particular case. For any element $g\in G$ we set $|g|=d(1, g)$. Let $\sf{E}_0=\{e_1,\ldots, e_r\}$ be a set of representatives of edge orbits. For each edge $e\in \sf{Z}_1$ we choose, and fix in the sequel, $g_e\in G$ such that $e=g_e\cdot e_i$. The number $|g_e|$ does not depend, up to an additive constant, on the choice of $g_e$, and in fact 
\begin{equation}\label{diamzege}
\mathrm{dist}(1,\sf{M}_e)=|g_e|+O(1), \ \forall\,e\in\sf{Z}_1.
\end{equation}
It is not difficult to see that any geodesic asymptotic $\Lambda_{\sf{C}_e}$ and to $\Lambda_{\sf{C}^-_e}$ must pass uniformly close from $\sf{M}_e$. This fact together with the uniform perfectness of $\partial \sf{C}$ implies
\begin{equation}\label{distentrelesdz2}
\mathrm{dist}(\Lambda_{\sf{C}_e},\Lambda_{\sf{C}^-_e})\asymp\exp(-\epsilon|g_e|)\ \forall\, e\in \sf{Z}_1.
\end{equation}

\begin{lemma}\label{estimsommetfini}
Let $v$ be a vertex of $\sf{Z}_0$. Then for every pair of edges $e,e'\in \sf{E}^*_v$, we have
\begin{equation}\label{eqestimsommetfini}
\mathrm{dist}\left(\Lambda_{\sf{C}^v_e},\Lambda_{\sf{C}^v_{e'}}\right)\asymp\exp(-\epsilon|g_e|).
\end{equation}
\end{lemma}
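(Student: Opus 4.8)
The plan is to prove Lemma \ref{estimsommetfini} by combining the two estimates just established in this subsection: the comparison \eqref{distentrelesdz2} for the distance between $\Lambda_{\sf{C}_e}$ and $\Lambda_{\sf{C}^-_e}$, and the fact \eqref{diamzege} that $\mathrm{dist}(1,\sf{M}_e)$ and $|g_e|$ agree up to bounded additive error. The key geometric observation is that for two distinct edges $e,e'\in\sf{E}^*_v$ incident to the \emph{same} vertex $v$, the sets $\sf{C}^v_e$ and $\sf{C}^v_{e'}$ are disjoint and ``far apart'' in a way controlled by how far $v$ is from the basepoint $1$; and that, since neither $e$ nor $e'$ is the edge $e^*$ pointing back toward $v_0$, both $|g_e|$ and $|g_{e'}|$ are comparable (up to additive constants) to $\mathrm{dist}(1,\sf{S}_v)$, hence to each other. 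So the right-hand side $\exp(-\epsilon|g_e|)$ is symmetric in $e,e'$ up to a multiplicative constant, which is what makes the statement well-posed.

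First I would establish the lower bound. Since $\sf{C}^v_{e'}\subset \sf{C}^-_e$ (the component on the far side of $m_{e'}$ from $v$, hence on the near side of $m_e$ toward $v_0$ — here one uses that $e,e'$ are distinct edges at $v$ and neither is $e^*$, so the path from $v_0$ to $\sf{C}^v_{e'}$ does not cross $m_e$), we have $\Lambda_{\sf{C}^v_{e'}}\subset \Lambda_{\sf{C}^-_e}$, and therefore $\mathrm{dist}(\Lambda_{\sf{C}^v_e},\Lambda_{\sf{C}^v_{e'}})\geq \mathrm{dist}(\Lambda_{\sf{C}_e},\Lambda_{\sf{C}^-_e})\gtrsim \exp(-\epsilon|g_e|)$, using that $\Lambda_{\sf{C}^v_e}=\Lambda_{\sf{C}_e}$ since $e\neq e^*$. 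For the upper bound I would bound the distance between the two limit sets by the distance between $\Lambda_{\sf{C}^v_e}$ and $\Lambda_{\sf{C}^-_e}$ (the latter contains $\Lambda_{\sf{C}^v_{e'}}$ as well as points near $v$), getting $\mathrm{dist}(\Lambda_{\sf{C}^v_e},\Lambda_{\sf{C}^v_{e'}})\lesssim \diam\Lambda_{\sf{C}^-_e}$ combined with the fact — again from uniform quasiconvexity, Lemma \ref{estiqvze} applied on the $\sf{Z}$ side — that all three sets $\Lambda_{\sf{C}^v_e}$, $\Lambda_{\sf{C}^v_{e'}}$ and a neighborhood of $v$ sit in a region of diameter $\lesssim \exp(-\epsilon\,\mathrm{dist}(1,\sf{S}_v))\asymp\exp(-\epsilon|g_e|)$. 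More cleanly: the whole ``tripod'' around $v$ maps under $\phi^{-1}$ to a uniformly quasiconvex set $\sf{M}_v$ at bounded Hausdorff distance from $G_v$ (Remark 3.1), with $\mathrm{dist}(1,\sf{M}_v)=|g_e|+O(1)$ for each $e\in\sf{E}^*_v$; so $\diam\Lambda_v\lesssim\exp(-\epsilon|g_e|)$, and since $\Lambda_{\sf{C}^v_e},\Lambda_{\sf{C}^v_{e'}}\subset\Lambda_v$ (both are limit sets reachable through $v$), the upper bound follows immediately.

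Then I would assemble: the lower bound gives $\mathrm{dist}(\Lambda_{\sf{C}^v_e},\Lambda_{\sf{C}^v_{e'}})\gtrsim\exp(-\epsilon|g_e|)$ and the upper bound gives $\lesssim\exp(-\epsilon|g_e|)$, whence the claimed comparison \eqref{eqestimsommetfini}. I would also note, for well-definedness of the statement, that $|g_e|=|g_{e'}|+O(1)$ because both equal $\mathrm{dist}(1,\sf{M}_v)+O(1)$ by \eqref{diamzege} and Remark 3.1, so the right-hand side of \eqref{eqestimsommetfini} does not depend (up to the implied constants) on whether we write $|g_e|$ or $|g_{e'}|$.

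The main obstacle I anticipate is the lower bound, specifically verifying carefully the nesting $\sf{C}^v_{e'}\subset\sf{C}^-_e$ and that $\Lambda_{\sf{C}^v_e}=\Lambda_{\sf{C}_e}$ — i.e. keeping straight the orientations of the half-trees $\sf{T}^v_e$ versus $\sf{T}_e$ and the role of $e^*$. Once the combinatorics of the tree $\sf{Z}$ around $v$ is set up correctly, the metric content is entirely carried by \eqref{distentrelesdz2} and the uniform quasiconvexity, so no new estimate is needed; the proof is essentially a bookkeeping argument reducing the two-edge statement to the one-edge statement \eqref{distentrelesdz2} plus the observation that distances to $\sf{M}_e$, $\sf{M}_{e'}$ and $\sf{M}_v$ all coincide up to additive constants.
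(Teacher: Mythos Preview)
Your lower bound is correct and matches the paper: from the tree combinatorics you get $\sf{C}^v_{e'}\subset\sf{C}^-_e$ and $\sf{C}^v_e=\sf{C}_e$, hence $\mathrm{dist}(\Lambda_{\sf{C}^v_e},\Lambda_{\sf{C}^v_{e'}})\geq\mathrm{dist}(\Lambda_{\sf{C}_e},\Lambda_{\sf{C}^-_e})\gtrsim\exp(-\epsilon|g_e|)$ by \eqref{distentrelesdz2}. The remark that $|g_e|=|g_{e'}|+O(1)$ is also correct and is the first line of the paper's proof.

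Your upper bound, however, contains a genuine error. You conclude by asserting $\Lambda_{\sf{C}^v_e},\Lambda_{\sf{C}^v_{e'}}\subset\Lambda_v$ and then bounding by $\diam\Lambda_v$. But in this subsection $G$ is virtually free and the vertex groups $G_v$ of $\sf{Z}$ are \emph{finite}; thus $\Lambda_v=\Lambda_{\sf{M}_v}=\emptyset$ (Remark~3.1 specialized to finite $G_v$), while $\Lambda_{\sf{C}^v_e}$ is a nonempty clopen piece of $\partial\sf{C}$. So the claimed containment is false and that line of argument collapses. Your earlier attempt via $\diam\Lambda_{\sf{C}^-_e}$ also does not work: $\Lambda_{\sf{C}^v_e}=\Lambda_{\sf{C}_e}$ is disjoint from $\Lambda_{\sf{C}^-_e}$, so you cannot bound the distance by that diameter, and in any case $\Lambda_{\sf{C}^-_e}$ can have diameter comparable to $\diam\partial\sf{C}$.

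The fix uses exactly the ingredients you already listed, but linked differently. You have that $\sf{M}_v$ is bounded (Hausdorff-close to the finite set $G_v$) with $\mathrm{dist}(1,\sf{M}_v)=|g_e|+O(1)$. The missing step is that any geodesic ray from $1$ to a point of $\Lambda_{\sf{C}^v_e}$ must cross $\sf{M}_e\subset\sf{M}_v$, and similarly for $e'$; two such rays therefore come within uniformly bounded distance of each other at height $|g_e|+O(1)$, so by hyperbolicity the Gromov product of their endpoints is at least $|g_e|-O(1)$, giving $\theta(x,y)\lesssim\exp(-\epsilon|g_e|)$. This is the paper's argument for the upper bound.
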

\begin{proof}
We know from (\ref{diamzege}), that $|g_e|=|g_{e'}|+O(1)$. Moreover, since $\Lambda_{\sf{C^v_{e'}}}$ is contained in the complement of $\Lambda_{\sf{C}^v_e}$, by (\ref{distentrelesdz2}) we have
$$\mathrm{dist}\left(\Lambda_{\sf{C}^v_e},\Lambda_{\sf{C}^v_{e'}}\right)\gtrsim\exp(-\epsilon|g_e|).$$
The upper bound follows from the hyperbolicity of $\sf{C}$, the fact that $\sf{M}_e\cup \sf{M}_{e'}\subset \sf{M}_v$ (which is contained in a ball of uniform radius), and the fact that any  ray from $1$ asymptotic $\Lambda_{\sf{C}^v_e}$, and any ray from $1$ asymptotic $\Lambda_{\sf{C}^v_{e'}}$, must pass through $\sf{M}_e$ and $\sf{M}_{e'}$ respectively.
\end{proof}

The following two claims will be important for the proof of Proposition \ref{smallpartition}: 
\medskip

\noindent \emph{Claim 1}: Let $x$ and $y$ be two different points on $\partial \sf{C}$, and suppose that the stabilizer $\mathrm{Stab}\left(\{x,y\}\right)$ is infinite. Then for any fixed $\delta>0$, there exists a finite set $\{g_1,\ldots,g_N\}\subset G$ such that if $g\in G$ verifies $\theta(g\cdot x,g\cdot y)\geq \delta$, then it can be written as 
\begin{equation*}\label{finite}
g=g_ih,\text{ for some } i\in\{1,\ldots,N\}\text{ and }h\in \mathrm{Stab}\left(\{x,y\}\right).
\end{equation*}

For the second claim, we need more preparation. Let $\mathcal{H}:=\{H_1,\ldots,H_k\}$ be a finite collection of two-ended subgroups of $G$. For each $i\in \{1,\ldots,k\}$ the limit set $\Lambda_i$ of $H_i$ consists of exactly two points of $\partial \sf{C}$, which we denote by $h_i^+$ and $h_i^-$. Define the decomposition space associated to $\mathcal{H}$ by $\mathrm{D}:=\partial \sf{C}/\sim$, where two points $x$ and $y$ of $\partial \sf{C}$ are identified if, either $x=y$, or there exists $g\in G$ such that $\{x,y\}=g\cdot \Lambda_i$. We denote by $\mathrm{p}:\partial \sf{C}\to \mathrm{D}$ the projection defined by $\sim$ and we equip $\mathrm{D}$ with the quotient topology. Note that any point of $\mathrm{D}$ has at most two pre-images under $\mathrm{p}$. By \emph{Claim 1}, $\mathrm{D}$ is a compact Hausdorff topological space. In the sequel, we will always suppose that $\mathrm{D}$ is connected. This implies
\medskip

\noindent \emph{Claim 2}: For any edge $e\in \sf{Z}_1$, there exists $i\in\{1,\ldots,k\}$ and $g\in G$ such that $g\cdot h_i^\pm\in\Lambda_{\sf{C}_e}$ and $g\cdot h_i^\mp\in\Lambda_{\sf{C}^-_e}$.
\medskip

This allows to consider, for any edge $e$ of $\sf{Z}$ the nonempty set 
$$\mathcal{P}_e:=\left\{g\in G:\exists\ i\in\{1,\ldots,k\}\text{ s.t. } g\cdot h_i^\pm\in\Lambda_{\sf{C}_e},g\cdot h_i^\mp\in \Lambda_{\sf{C}^-_e}\right\}.$$
Let 
$$P_e:=\bigcup_{g\in\mathcal{P}_e}g\cdot \Lambda_i,$$ and let $\mathrm{D}_e=\mathrm{p}\left(P_e\right)$ be its projection onto $\mathrm{D}$.

\begin{lemma}\label{finiteedgeset}
For any edge $e$ of $\sf{Z}$, the set $P_e$ is finite and $\mathrm{D}\setminus\mathrm{D}_e$ is disconnected.
\end{lemma}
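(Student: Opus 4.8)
\textbf{Proof plan for Lemma \ref{finiteedgeset}.}

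The plan is to prove the two assertions separately. For the finiteness of $P_e$, I would first observe that $P_e$ is a union of translates $g\cdot\Lambda_i$ with $g\in\mathcal{P}_e$, so it suffices to bound the cardinality of $\mathcal{P}_e$ modulo the equivalence ``$g\sim g'$ iff $g'=gh$ for some $h\in\mathrm{Stab}(\Lambda_i)=H_i$'' (for the relevant index $i$), since each equivalence class contributes at most one translate $g\cdot\Lambda_i$ to $P_e$. Fix $g\in\mathcal{P}_e$ with associated index $i$. By definition, $g\cdot h_i^+$ and $g\cdot h_i^-$ lie on opposite sides of $\sf{M}_e$ (one in $\Lambda_{\sf{C}_e}$, one in $\Lambda_{\sf{C}^-_e}$), so any geodesic in $\sf{C}$ joining them must pass uniformly close to $\sf{M}_e$; combined with (\ref{distentrelesdz2}) this forces $\theta(g\cdot h_i^+,g\cdot h_i^-)\asymp\exp(-\epsilon|g_e|)$, i.e. bounded below by a constant $\delta=\delta(e)>0$ depending only on $e$. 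Now apply \emph{Claim 1} to the pair $\{h_i^+,h_i^-\}$ (whose stabilizer $H_i$ is infinite since $H_i$ is two-ended): there is a finite set $\{g_1,\ldots,g_{N_i}\}$ so that $g=g_j h$ with $h\in H_i$. Hence $g\cdot\Lambda_i=g_j\cdot\Lambda_i$ takes at most $N_i$ values; ranging over the finitely many indices $i$ shows $P_e$ is a finite union of finite sets, hence finite.

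For the disconnectedness of $\mathrm{D}\setminus\mathrm{D}_e$, the idea is that $\mathrm{D}_e$ is exactly the ``image of the cut locus'' that $\Lambda_{\sf{C}_e}$ induces in $\mathrm{D}$, and removing it should separate $\mathrm{p}(\Lambda_{\sf{C}_e}\setminus P_e)$ from $\mathrm{p}(\Lambda_{\sf{C}^-_e}\setminus P_e)$. Concretely, set $U:=\mathrm{p}\bigl((\Lambda_{\sf{C}_e}\setminus P_e)\bigr)$ and $V:=\mathrm{p}\bigl((\Lambda_{\sf{C}^-_e}\setminus P_e)\bigr)$. Since $\Lambda_{\sf{C}_e}$ and $\Lambda_{\sf{C}^-_e}$ are disjoint and open-and-closed in $\partial\sf{C}$ (as noted before Lemma \ref{estimsommetfini}, because $\Lambda_e=\emptyset$ here), and since $P_e$ is finite, $\Lambda_{\sf{C}_e}\setminus P_e$ and $\Lambda_{\sf{C}^-_e}\setminus P_e$ are open in $\partial\sf{C}$. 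I would then check: (a) $U\cap V=\emptyset$ and $U\cup V=\mathrm{D}\setminus\mathrm{D}_e$; (b) $U$ and $V$ are open in $\mathrm{D}$. Point (a) reduces to showing that the only equivalences $x\sim y$ with $x\in\Lambda_{\sf{C}_e}$, $y\in\Lambda_{\sf{C}^-_e}$ are precisely those coming from pairs $g\cdot\Lambda_i$ with $g\in\mathcal{P}_e$ — this is immediate from the definition of $\mathcal{P}_e$, since an equivalence straddling the two sides of $\sf{M}_e$ must be realized by some $g\cdot\Lambda_i$ with $g\cdot h_i^\pm$ on the two different sides — together with the observation that pairs $g\cdot\Lambda_i$ entirely contained in $\Lambda_{\sf{C}_e}$ (resp. $\Lambda_{\sf{C}^-_e}$) do not identify anything in $\Lambda_{\sf{C}^-_e}$ (resp. $\Lambda_{\sf{C}_e}$). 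Also one needs that $\mathrm{D}_e\subset\mathrm{p}(P_e)$ and that $U,V$ do not meet $\mathrm{D}_e$, which follows by noting that any point of $P_e$ lies in some $g\cdot\Lambda_i$ with $g\in\mathcal{P}_e$, so both its preimages lie in $P_e$. For (b): openness of $\mathrm{p}^{-1}(U)$ in $\partial\sf{C}$ — one checks $\mathrm{p}^{-1}(U) = (\Lambda_{\sf{C}_e}\setminus P_e)\cup\{y\in\Lambda_{\sf{C}^-_e}: y\sim x \text{ for some } x\in\Lambda_{\sf{C}_e}\setminus P_e\}$, but the second set is empty by (a), so $\mathrm{p}^{-1}(U)=\Lambda_{\sf{C}_e}\setminus P_e$, which is open; hence $U$ is open by definition of the quotient topology, and symmetrically for $V$. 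Since $\mathrm{D}$ is connected and both $U$ and $V$ are nonempty (each $\Lambda_{\sf{C}^{\pm}_e}$ is infinite, being the limit set of an infinite group, while $P_e$ is finite), $\mathrm{D}_e$ is nonempty and $\mathrm{D}\setminus\mathrm{D}_e=U\sqcup V$ is a separation.

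I expect the main obstacle to be the careful bookkeeping in part (a): verifying that no ``extra'' identifications sneak across $\sf{M}_e$ beyond those recorded by $\mathcal{P}_e$, and that $\mathrm{D}_e$ really is a subset of the cut set so that $U$ and $V$ avoid it. This is essentially a matter of unwinding the definitions of $\sim$, $\mathcal{P}_e$, $P_e$ and $\mathrm{D}_e$ and using that $\Lambda_{\sf{C}_e}$ and $\Lambda_{\sf{C}^-_e}$ partition $\partial\sf{C}$; the topological conclusion (a finite set disconnecting a compact connected space) is then formal. A secondary subtlety is making sure $P_e\ne\emptyset$ and that $\mathrm{D}_e\ne\emptyset$ — this is where \emph{Claim 2} enters, guaranteeing that at least one translate $g\cdot\Lambda_i$ straddles $e$, so that $\mathcal{P}_e$ (hence $P_e$ and $\mathrm{D}_e$) is nonempty, which is needed for the separation to be nontrivial.
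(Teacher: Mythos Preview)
Your proposal is correct and follows essentially the same approach as the paper: use the lower bound from (\ref{distentrelesdz2}) together with \emph{Claim 1} for finiteness, and then show that $\Lambda_{\sf{C}_e}\setminus P_e$ and $\Lambda_{\sf{C}^-_e}\setminus P_e$ are open, closed, saturated sets whose projections separate $\mathrm{D}\setminus\mathrm{D}_e$. The paper's version is terser---it simply records $\mathrm{p}(\Lambda_{\sf{C}_e})\cap\mathrm{p}(\Lambda_{\sf{C}^-_e})=\mathrm{D}_e$ and concludes---but your more detailed verification of saturation and nonemptiness unpacks exactly the same argument.
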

\begin{proof}
Fix $e$ an edge of $\sf{Z}$, we first prove that $P_e$ is finite. Indeed, if $g\in \mathcal{P}_e$, then by (\ref{distentrelesdz2})
$$\theta(g\cdot h_i^\pm,g\cdot h_i^\mp)\geq \mathrm{dist}(\Lambda_{\sf{C}_e},\Lambda_{\sf{C}^-_e}):=c_e>0,$$ where $c_e$ only depends on $e$. By \emph{Claim 1}, there exists a finite subset $F_e$ of $G$ such that $g=g'h$, with $g'\in F_e$ and $h\in \mathrm{Stab}(\Lambda_i)$. Therefore, $g\cdot \Lambda_i=g'\cdot\Lambda_i$, and $P_e$ is finite. 

Let us show that $\mathrm{D}_e$ is a cut-set in $\mathrm{D}$. Since $P_e$ is finite, it is closed in $\partial \sf{C}$. Since $\mathrm{p}(\Lambda_{\sf{C}_e})\cap\mathrm{p}(\Lambda_{\sf{C}^-_e})=\mathrm{D}_e$, we see that $\Lambda_{\sf{C}_e}\setminus P_e$ and $\Lambda_{\sf{C}^-_e}\setminus P_e$ are open and closed saturated sets in $\partial \sf{C}$. Taking the projection of these sets by $\mathrm{p}$ gives a non-trivial partition of $\mathrm{D}\setminus\mathrm{D}_e$.  
\end{proof}

\begin{remark}[Remark 3.2]
It is important to note that $P_e$ is finite because $\Lambda_{e}=\emptyset$. The finiteness conclusion of Lemma \ref{finiteedgeset} does not hold in general if the edge stabilizers are not finite. This is a key point for the proof of Theorem \ref{TEOBRGROUPES}.
\end{remark}

We prove now the main result of this section.
\begin{proposition}\label{smallpartition}
Let $\delta>0$. There exists a finite set $\sf{E}_\delta$ of edges of $\sf{Z}$ such that if we denote by $\mathrm{D}_\delta:=\bigcup_{e\in \sf{E}_\delta}\mathrm{D}_e$ and by $P_\delta:=\mathrm{p}^{-1}(\mathrm{D}_\delta)$, then $\partial \sf{C}\setminus P_\delta$ can be partitioned into a finite union of open and closed saturated sets $A$ with $\mathrm{diam} A\leq\delta$. In particular, if $\mathrm{U}$ is a connected component of $\mathrm{D}\setminus \mathrm{D}_\delta$, then 
$$\mathrm{diam}\ \mathrm{p}^{-1}(\mathrm{U})\leq \delta.$$
Moreover, this implies that if for some $g\in G$ and $i\in \{1,\ldots,k\}$ we have $g\cdot \Lambda_i\subset \partial \sf{C}\setminus P_\delta$, then $g\cdot \Lambda_i\subset A$ for some subset $A$ of the partition.
\end{proposition}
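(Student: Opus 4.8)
\textbf{Proof proposal for Proposition \ref{smallpartition}.}

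The plan is to build $\sf{E}_\delta$ by a finiteness argument using the key estimates \eqref{estidiamze} and \eqref{distentrelesdz2}, and then to check the required separation properties by tracking how the projection $\mathrm{p}$ interacts with the sets $\Lambda_{\sf{C}_e}$. First I would fix $\delta>0$ and observe that, by \eqref{estidiamze} together with \eqref{distzealuno}, there are only finitely many edges $e\in\sf{Z}_1$ for which $\diam\Lambda_{\sf{C}_e}>\delta$ (equivalently, $\mathrm{dist}(1,\sf{C}_e)$ is bounded). Call these the \emph{big} edges. One wants to take $\sf{E}_\delta$ to consist of a suitable collection of small edges that ``surrounds'' each big edge: concretely, for each big edge $e$ pick the finite set of edges $e'\subset\sf{C}_e$ incident to $\sf{C}_e$ that are minimal with $\diam\Lambda_{\sf{C}_{e'}}\leq\delta$ — this is a finite set because the tree is locally finite (finitely many $G$-orbits of edges, finite vertex stabilizers, so finite valence). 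Taking the union over all big edges (there are finitely many) gives a finite set $\sf{E}_\delta$, and by construction the connected components of $\sf{Z}\setminus\bigcup_{e\in\sf{E}_\delta}\{m_e\}$ all have the form $\sf{T}^v_{e}$ (up to reflection) with $\diam\Lambda_{\sf{C}_e}\leq\delta$, or are bounded pieces of the tree whose $\phi$-preimage has limit set of small diameter by \eqref{estidiamze}.

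Next I would set $\mathrm{D}_\delta:=\bigcup_{e\in\sf{E}_\delta}\mathrm{D}_e$ and $P_\delta:=\mathrm{p}^{-1}(\mathrm{D}_\delta)$. By Lemma \ref{finiteedgeset}, each $P_e$ is finite, hence $P_\delta$ is finite and closed in $\partial\sf{C}$. The sets $\Lambda_{\sf{C}_e}\setminus P_\delta$ for $e$ ranging over $\sf{E}_\delta$ (and appropriate orientations) are open, closed, and saturated for $\sim$ — openness and closedness because each $\Lambda_{\sf{C}_e}$ is clopen in $\partial\sf{C}$ (as noted before Lemma \ref{estimsommetfini}, since $\Lambda_e=\emptyset$) and $P_\delta$ is finite hence clopen; saturation because, as in the proof of Lemma \ref{finiteedgeset}, $\mathrm{p}(\Lambda_{\sf{C}_e})\cap\mathrm{p}(\Lambda_{\sf{C}^-_e})=\mathrm{D}_e\subset\mathrm{D}_\delta$, so no identification escapes a given piece once the $\mathrm{D}_e$'s are removed. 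These clopen pieces generate a finite Boolean algebra of clopen saturated subsets of $\partial\sf{C}\setminus P_\delta$; taking its atoms yields a finite partition of $\partial\sf{C}\setminus P_\delta$ into clopen saturated sets $A$. Each such atom is contained in some $\Lambda_{\sf{C}_e}$ with $e\in\sf{E}_\delta$ (or in $\partial\sf{C}\setminus\bigcup_{e\in\sf{E}_\delta}\Lambda_{\sf{C}_e}$, which by the construction of $\sf{E}_\delta$ also has small diameter), so $\diam A\leq\delta$. The statement about components $\mathrm{U}$ of $\mathrm{D}\setminus\mathrm{D}_\delta$ follows since $\mathrm{p}^{-1}(\mathrm{U})$ is a saturated clopen set disjoint from $P_\delta$, hence a union of atoms $A$, but being connected upstairs forces it into a single atom; and the final assertion about $g\cdot\Lambda_i\subset\partial\sf{C}\setminus P_\delta$ is immediate, since $g\cdot\Lambda_i$ projects to a single point of $\mathrm{D}\setminus\mathrm{D}_\delta$ and any atom $A$, being saturated, must then contain all of $g\cdot\Lambda_i$.

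The main obstacle I anticipate is the combinatorial bookkeeping in the first paragraph: one must choose $\sf{E}_\delta$ so that simultaneously (a) it is finite, (b) every complementary piece has limit set of diameter $\leq\delta$, and (c) the pieces genuinely glue up into a \emph{partition} rather than merely a cover — i.e. that distinct atoms of the generated Boolean algebra are disjoint and that no atom is accidentally empty. Controlling (b) uniformly requires care: a complementary region of $\sf{Z}\setminus\bigcup\{m_e\}$ bounded by several edges of $\sf{E}_\delta$ need not be of the form $\Lambda_{\sf{C}_e}$, so one has to argue via Lemma \ref{estiqvze} (or via a direct quasiconvexity estimate on $\sf{M}_v$) that such a region still has diameter $\leq\delta$, possibly after shrinking $\delta$ by a controlled factor at the start. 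Everything else — finiteness of $P_\delta$, the clopen/saturated properties, and the passage to atoms — is then formal.
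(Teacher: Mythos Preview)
Your strategy is genuinely different from the paper's and can be made to work, but there is a real gap in the diameter estimate, and one minor slip elsewhere.

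\textbf{How the paper argues.} The paper takes $\sf{E}_\eta$ to be the edges \emph{of} the finite subtree $\sf{Z}_\eta$ spanned by all ``big'' edges (those with $\diam\Lambda_{\sf{C}_e}\geq\eta$), not the edges surrounding it. The partition pieces are indexed by the \emph{vertices} $v$ of $\sf{Z}_\eta$: one sets $A_v$ to be the union of $\Lambda_{\sf{C}^v_e}\setminus P_\eta$ over all edges $e$ incident to $v$ but outside $\sf{Z}_\eta$. Saturation of the $A_v$'s holds because any identification between pieces at distinct vertices must cross an interior edge of $\sf{Z}_\eta$ and hence lies in $\mathrm{D}_\eta$. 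The bound $\diam A_v\leq K\eta$ is exactly where Lemma~\ref{estimsommetfini} is used: each $A_v$ is a union of several small half-spaces at a common vertex, and one needs the mutual distances between them to be $\lesssim\eta$ as well.

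\textbf{The gap in your argument.} Your route---taking $\sf{E}_\delta$ to be \emph{boundary} (first small) edges and using the individual $\Lambda_{\sf{C}_e}$ as pieces---would sidestep Lemma~\ref{estimsommetfini}, which is a real simplification. But the step where you assert that the residual atom $\partial\sf{C}\setminus\bigcup_{e\in\sf{E}_\delta}\Lambda_{\sf{C}_e}$ ``also has small diameter'' is not justified: Lemma~\ref{estiqvze} bounds a single $\Lambda_{\sf{C}_e}$, not a complement or a union, and your description of $\sf{E}_\delta$ (``edges $e'\subset\sf{C}_e$ incident to $\sf{C}_e$'') is too imprecise to force anything. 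What you are missing is that this residual set is \emph{empty}. Since $G$ is virtually free with finite vertex stabilizers, $\partial\sf{C}$ identifies with $\partial\sf{Z}$; every boundary point is a ray from $v_0$, and any such ray exits a finite subtree containing $v_0$ through a unique boundary edge. So if you set $\sf{E}_\delta$ to be precisely the edges incident to, but not in, the convex hull of $\{v_0\}$ together with the big edges (a finite subtree by local finiteness), then the sets $\Lambda_{\sf{C}_e}$ for $e\in\sf{E}_\delta$ already partition $\partial\sf{C}$ into clopen pieces of diameter $<\delta$, saturation follows as you say, and the Boolean-algebra bookkeeping becomes unnecessary.

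\textbf{Minor slip.} In deducing the statement about components $\mathrm{U}$ of $\mathrm{D}\setminus\mathrm{D}_\delta$, you write that $\mathrm{p}^{-1}(\mathrm{U})$ is ``connected upstairs'' and hence lies in a single atom. But $\partial\sf{C}$ is a Cantor set, so $\mathrm{p}^{-1}(\mathrm{U})$ is never connected. The correct argument is downstairs: each atom $A$ is saturated and clopen, so $\mathrm{p}(A)$ is clopen in $\mathrm{D}\setminus\mathrm{D}_\delta$; since $\mathrm{U}$ is connected it lies in a single $\mathrm{p}(A)$, hence $\mathrm{p}^{-1}(\mathrm{U})\subset A$.
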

\begin{proof}
Let $\eta>0$ and let $\sf{E}_{\eta,0}$ be the finite subset of edges of $\sf{Z}$ such that $\diam\Lambda_{\sf{C}_e}\geq \eta$, and let $\sf{Z}_\eta$ be its convex hull in $\sf{Z}$, so that $\sf{Z}_\eta$ is a finite subtree of $\sf{Z}$. Let $\sf{E}_\eta$ be the set of edges of $\sf{Z}_\eta$. For each vertex $v\in \sf{Z}_\eta$ we denote by $\sf{E}^*_v(\eta):=\sf{E}_v\setminus \sf{E}_\eta$. We can assume without loss of generality that $v_0\in \sf{Z}_\eta$, so that for each vertex $v$ of $\sf{Z}_\eta$, and each edge $e\in \sf{E}^*_v(\eta)$, we have $\Lambda_{\sf{C}^v_e}=\Lambda_{\sf{C}_e}$, and in particular, that $\mathrm{diam}\ \Lambda_{\sf{C}^v_e}\leq\eta$. We set 
$$\mathrm{D}_\eta:=\bigcup\limits_{e\in \sf{E}_\eta}\mathrm{D}_e.$$
If $v_1$ and $v_2$ are two different vertices of $\sf{Z}_\eta$, and if $e_j\in \sf{E}^*_{v_j}(\eta),\ j=1,2$, then
\begin{equation}\label{intersection}
\mathrm{p}\left(\Lambda_{\sf{C}^{v_1}_{e_1}}\right)\cap\mathrm{p}\left(\Lambda_{\sf{C}^{v_2}_{e_2}}\right)\subset \mathrm{D}_\eta.
\end{equation}
Let $P_\eta:=\mathrm{p}^{-1}(\mathrm{D}_\eta)$. For each vertex $v$ of $\sf{Z}_\eta$, let $A_v$ be the union of the sets $\Lambda_{\sf{C}^v_e}\setminus P_\eta$ with $e\in \sf{E}_v^*(\eta)$. Then $A_v$ is open and closed in $\partial \sf{C}\setminus P_\eta$, and is a saturated set. This provides a partition of $\partial \sf{C}\setminus P_\eta$ by open and closed saturated sets. Therefore, if $\mathrm{U}$ is a connected component of $\mathrm{D}\setminus \mathrm{D}_\eta$, there exists a vertex $v$ of $\sf{Z}_\eta$ such that $\mathrm{U}\subset \mathrm{p}(A_v)$. Since $A_v$ is saturated, this implies that $\mathrm{p}^{-1}(\mathrm{U})\subset A_v$.

\noindent We end by estimating the diameter of the sets $A_v$. Indeed, by the definition of $\sf{Z}_\eta$ and by Lemma \ref{estimsommetfini}, we have
\begin{equation*}
\mathrm{diam} A_v  \leq 2\max\limits_{e\in \sf{E}_v^*(\eta)}\{\mathrm{diam} \Lambda_{\sf{C}^v_e}\}+\max\limits_{e,e'\in \sf{E}_v^*(\eta)}\mathrm{dist}\left(\Lambda_{\sf{C}^v_e},\Lambda_{\sf{C}^v_{e'}}\right)\leq K\eta,
\end{equation*}
where $K$ is a uniform constant. So it is enough to take $\eta$ small enough so that $K\eta\leq \delta$.
\end{proof}

\begin{remark}[Remark 3.3]
We use the fact that $G$ is virtually free in the following key point of the proof of Proposition \ref{smallpartition}. The sets $A_v$ introduced in the proof do not contain the limit sets $\Lambda_v$ for a vertex $v\in \sf{Z}_\eta$. When $G$ is virtually free this is not a problem since $\Lambda_v=\emptyset$. Thus, the sets $A_v$ form a covering of $\partial \sf{C}\setminus P_\eta$ (note also that $\partial \sf{C}$ can be identified with $\partial\sf{T}$ when $G$ is virtually free). Equation (\ref{intersection}) is always true by construction, so in general one needs to add $\Lambda_v\setminus P_\eta$ to $A_v$ in order to obtain a partition of $\partial\sf{C}\setminus P_\eta$. This prevents the diameter of $A_v$ to be small. Nevertheless, since the diameter of $\mathrm{p}^{-1}(x)$ tends to zero for $x\in\mathrm{D}$, there is only a finite number of classes ``connecting'' two given limit sets $\Lambda_v$ and $\Lambda_w$. Therefore, adding a finite number of points to $P_\eta$ if necessary, one can separate $\mathrm{p}(\Lambda_v)$ and $\mathrm{p}(\Lambda_w)$ by removing a finite number of points of $\mathrm{D}$.
\end{remark}

\begin{remark}[Remark 3.4]
We will use in the next section the fact that the set $P_\delta$ in the statement of Proposition \ref{smallpartition} can be assumed to contain any given finite set of equivalence classes. 
\end{remark}

\subsection{Proof of Theorem \ref{TEOBRGROUPES}}\label{theproof}

\subsubsection{All rigid type vertices are virtually free implies WS\label{directo}}
Assume now that in the JSJ splitting of $G$ all rigid type vertices are virtually free. In this case, all the vertices of $\sf{T}$ are either non-elementary virtually free or virtually cyclic. 

Let $v$ be a vertex in $\sf{T}_S\cup \sf{T}_R$, so $G_v$ is non-elementary virtually free. Recall that $\sf{E}_v$ is the set of edges of $\sf{T}$ which are incident to $v$. Since the action of $G$ on $\sf{T}$ preserves the type of a vertex and two adjacent vertices are of different type, the stabilizer of $\sf{E}_v$ is also $G_v$. This implies that the number of $G$-orbits of edges incident to $v$ is the same as the number of $G_v$-orbits, and therefore $\sf{E}_v/G_v$ is finite. Let $\{f_1,\ldots,f_{k_v}\}$ be a finite set of representatives of $\sf{E}_v/G_v$, and write $H_j(v)$ the two-ended subgroup $G(f_j)$ of $G_v$. We consider the decomposition space $\mathrm{D}_v$ associated to $v$ and we denote by $\mathrm{p}_v:\Lambda_v\to\mathrm{D}_v$ the quotient map like in Section \ref{virtuallyfree}. Another equivalent way to construct $\mathrm{D}_v$ is by identifying the points of $\partial \sf{C}$ which are in the same $\Lambda_{\sf{C}^v_e}$ for $e\in \sf{E}_v$. Since $\partial \sf{C}$ is connected, the same is true for $\mathrm{D}_v$. Moreover, the space $\mathrm{D}_v$ is either homeomorphic to the circle if $v\in \sf{T}_S$, or it is locally connected without cut-points and cut-pairs if $v\in \sf{T}_R$ \cite{CaMa,H3}. Therefore, we can apply the results of Section \ref{virtuallyfree}.

The subsets used in Section \ref{virtuallyfree} to separate the decomposition space $\mathrm{D}_v$ were constructed by projecting a finite union of sets of the form $g\cdot \Lambda_{f_j}$ with $g\in G_v$ and $j\in\{1,\ldots,k_v\}$. Then, in this case, they are the projection of a finite union of $\Lambda_e$ with $e\in \sf{E}_v$. We can state Proposition \ref{smallpartition} in the following way: for all $\delta>0$, there exists a finite set of edges $\{e_1,\ldots,e_n\}$, where $n$ depends on $v$ and $\delta$, of $\sf{E}_v$ such that if we set $P_v:=\bigcup_{i=1}^n\Lambda_{e_i}$, then
$$\Lambda_v\setminus P_v=\bigcup_{i=1}^{m}A_i,$$
where the sets $A_i$ are disjoint, open and closed in $\Lambda_v\setminus P_v$, and saturated sets. Here $m$ also depends on $v$ and $\delta$. Moreover, for any other edge $e\in \sf{E}_v$, there exists $i\in\{1,\ldots,m\}$ such that $\Lambda_e\subset A_i$. Given such a partition, we denote by $\sf{E}_v(i)$, for $i=1,\ldots,m$, the edges $e$ incident to $v$ such that $\Lambda_e\subset A_i$. Naturally,
$$\sf{E}_v=\{e_1,\ldots, e_n\}\cup\bigcup_{i=1}^{m}\sf{E}_v(i),$$
is a partition of all edges incident to $v$. Let
$$K_i(v)=A_i\cup\bigcup\limits_{e\in \sf{E}_v(i)}\Lambda_{\sf{C}^v_e}\text{ for } i=1,\ldots, m.$$

\begin{lemma}\label{Leski}
The sets $\{K_i(v)\}_{i=1}^{m}$ are pairwise disjoint and closed in $\partial \sf{C}\setminus P_v$.
\end{lemma}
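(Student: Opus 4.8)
The statement to be proved is Lemma \ref{Leski}: the sets $K_i(v) = A_i \cup \bigcup_{e \in \sf{E}_v(i)} \Lambda_{\sf{C}^v_e}$, for $i=1,\dots,m$, are pairwise disjoint and closed in $\partial\sf{C}\setminus P_v$. The natural strategy is to analyze each of the two pieces $A_i$ and $\bigcup_{e}\Lambda_{\sf{C}^v_e}$ separately, relate them through the projection $\mathrm{p}_v$, and then transport the disjointness and closedness that Proposition \ref{smallpartition} already gives us inside $\mathrm{D}_v$ (resp.\ $\Lambda_v\setminus P_v$) up to the full boundary $\partial\sf{C}\setminus P_v$.

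\medskip

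\textbf{Step 1: Disjointness.} First I would show that the $A_i$ are pairwise disjoint (this is immediate: they come from the partition in Proposition \ref{smallpartition}) and that for $i\ne j$ and $e\in\sf{E}_v(i)$, $e'\in\sf{E}_v(j)$ the sets $\Lambda_{\sf{C}^v_e}$ and $\Lambda_{\sf{C}^v_{e'}}$ are disjoint in $\partial\sf{C}\setminus P_v$. Here I would invoke the basic tree structure: for two distinct edges $e,e'$ incident to the same vertex $v$, the half-trees $\sf{T}^v_e$ and $\sf{T}^v_{e'}$ are disjoint, hence $\sf{C}^v_e$ and $\sf{C}^v_{e'}$ meet only in a bounded set, so $\Lambda_{\sf{C}^v_e}\cap\Lambda_{\sf{C}^v_{e'}}\subset\Lambda_e\cap\Lambda_{e'}=\emptyset$ (using that $G$ is virtually free here, or more generally that two distinct incident edges have disjoint limit sets at the boundary, which in the JSJ case follows because edges incident to $v$ correspond to distinct jumps/peripheral subgroups). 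Then I must check the cross terms: $A_i\cap\Lambda_{\sf{C}^v_{e'}}=\emptyset$ for $e'\in\sf{E}_v(j)$, $j\ne i$. By construction $A_i$ is a saturated subset of $\Lambda_v\setminus P_v$ and, projecting by $\mathrm{p}_v$, the component $\mathrm{p}_v(A_i)$ of $\mathrm{D}_v\setminus\mathrm{D}_v(\cdot)$ is disjoint from $\mathrm{p}_v(A_j)$; since $\Lambda_{e'}\subset A_j$, the point $\mathrm{p}_v(\Lambda_{e'})$ lies in $\mathrm{p}_v(A_j)$, and the whole half-space $\Lambda_{\sf{C}^v_{e'}}$ projects (inside $\mathrm{D}_v$, via the fact that $\mathrm{p}_v$ collapses each $\Lambda_{\sf{C}^v_{e'}}$ to a single point) to that same point, hence stays in $\mathrm{p}_v^{-1}(\mathrm{p}_v(A_j))$. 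This separates $K_i(v)$ from $K_j(v)$.

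\medskip

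\textbf{Step 2: Closedness in $\partial\sf{C}\setminus P_v$.} For this I would argue that $K_i(v)$ is the intersection of $\partial\sf{C}\setminus P_v$ with a closed subset of $\partial\sf{C}$. The natural candidate is $\overline{A_i}\cup\bigcup_{e\in\sf{E}_v(i)}\Lambda_{\sf{C}^v_e}$; but since $\sf{E}_v(i)$ may be infinite I must control accumulation. Here is where the diameter estimate \eqref{estidiamze} and \eqref{distzealuno} do the work: for $e$ ranging over edges incident to $v$, $\diam\Lambda_{\sf{C}^v_e}\to 0$, so any accumulation point of $\bigcup_{e\in\sf{E}_v(i)}\Lambda_{\sf{C}^v_e}$ that is not already in one of the finitely-many ``large'' pieces must be a single boundary point, and that point is a limit of a sequence of half-spaces all projecting to (the closure of) $\mathrm{p}_v(A_i)$; since $\mathrm{p}_v(A_i)$ is a connected component of $\mathrm{D}_v\setminus\mathrm{D}_\delta$ hence closed in $\mathrm{D}_v\setminus\mathrm{D}_\delta$, and $\mathrm{p}_v$ is continuous, the limit point lies in $\mathrm{p}_v^{-1}(\overline{\mathrm{p}_v(A_i)})\cap(\partial\sf{C}\setminus P_v) = A_i \cup(\text{half-spaces over }\sf{E}_v(i))$ (the boundary of $\mathrm{p}_v(A_i)$ being contained in $\mathrm{D}_\delta = \mathrm{p}_v(P_v)$, so removed). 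I would phrase this as: $K_i(v) = \mathrm{p}_v^{-1}\!\big(\text{component }\mathrm{U}_i\text{ of }\mathrm{D}_v\setminus\mathrm{D}_\delta\big)\cup\bigcup_{e\in\sf{E}_v(i)}\Lambda_{\sf{C}^v_e}$, and since the component $\mathrm{U}_i$ is open-and-closed in $\mathrm{D}_v\setminus\mathrm{D}_\delta$ (components of the complement of a finite set in a locally connected space), its preimage is open-and-closed in $\Lambda_v\setminus P_v$, and adjoining the finitely-accumulating family of small half-spaces keeps it closed in $\partial\sf{C}\setminus P_v$.

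\medskip

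\textbf{Main obstacle.} The delicate point is the closedness across the ``seam'' between $A_i\subset\Lambda_v$ and the half-spaces $\Lambda_{\sf{C}^v_e}$ hanging off $v$: one needs that a sequence of half-spaces $\Lambda_{\sf{C}^v_{e_n}}$ with $e_n\in\sf{E}_v(i)$ cannot accumulate onto a point of some $K_j(v)$ with $j\ne i$, nor onto a point of $P_v$ that has been removed. This is exactly controlled by the fact that the $\Lambda_{\sf{C}^v_e}$ shrink to points, that those points lie in $\mathrm{p}_v(A_i)$, and that the components $\mathrm{p}_v(A_i)$ of $\mathrm{D}_v\setminus\mathrm{D}_\delta$ are genuinely separated in $\mathrm{D}_v$ with frontier inside $\mathrm{D}_\delta$ — i.e.\ one is really using the topology of $\mathrm{D}_v$ (circle, or locally connected without cut pairs) and Proposition \ref{smallpartition} in an essential way, rather than a soft point-set argument. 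I expect the bulk of the write-up to be making this seam argument precise, while the pure disjointness (Step 1) is routine.
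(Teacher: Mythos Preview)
Your strategy is sound and is essentially the standard argument; note that the paper itself does not write out a proof of this lemma but simply refers to Lemma~4.6 of \cite{Bow2}, whose proof proceeds along the same lines you sketch: use the tree structure to get $\Lambda_{\sf{C}^v_e}\cap\Lambda_{\sf{C}^v_{e'}}\subset\Lambda_e\cap\Lambda_{e'}$ and $\Lambda_{\sf{C}^v_e}\cap\Lambda_v=\Lambda_e$, then use the shrinking of $\diam\Lambda_{\sf{C}^v_e}$ (Lemma~\ref{estiqvze}) to control accumulation.

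Two small imprecisions worth cleaning up. First, in Step~1 you write ``using that $G$ is virtually free here''; in Section~\ref{directo} the ambient group $G$ is a general one-ended hyperbolic group and it is only the vertex group $G_v$ that is virtually free. In fact you do not need any special hypothesis at this point: for $e\in\sf{E}_v(i)$ and $e'\in\sf{E}_v(j)$ with $i\ne j$ one has $\Lambda_e\subset A_i$ and $\Lambda_{e'}\subset A_j$, so $\Lambda_e\cap\Lambda_{e'}\subset A_i\cap A_j=\emptyset$ directly from the partition. Second, in Step~2 you identify each $A_i$ with $\mathrm{p}_v^{-1}(\mathrm{U}_i)$ for a \emph{connected component} $\mathrm{U}_i$ of $\mathrm{D}_v\setminus\mathrm{D}_\delta$; but the $A_i$ coming from Proposition~\ref{smallpartition} are only required to be open-and-closed saturated pieces, not connected. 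This does not affect your argument: all you actually use is that $A_i$ is closed in $\Lambda_v\setminus P_v$, that $\Lambda_e\subset A_i$ for $e\in\sf{E}_v(i)$, and that $\diam\Lambda_{\sf{C}^v_e}\to 0$, from which the accumulation argument (your Subcase ``seam'') goes through exactly as you outline, since any limit of points in $\Lambda_{\sf{C}^v_{e_n}}$ with $e_n\in\sf{E}_v(i)$ distinct is also a limit of points of $\Lambda_{e_n}\subset A_i$ and hence lies in $\overline{A_i}\cap(\partial\sf{C}\setminus P_v)=A_i$.
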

The proof is similar to that of Lemma 4.6 in \cite{Bow2}. We can partition the boundary of $\sf{C}$ as
\begin{equation}\label{decompbord}
\partial \sf{C}=\bigcup_{i=1}^n\left(\Lambda_{\sf{C}^v_{e_i}}\setminus P_v\right)\cup \bigcup_{i=1}^{m}K_i(v)\cup P_v,
\end{equation}
and the sets of the first and second union are closed in $\partial \sf{C}\setminus P_v$. Furthermore, we have
\begin{equation}\label{diamki}
\diam\ K_i(v)\leq\diam\ A_i+2\cdot\max\limits_{e\in \sf{E}_v(i)}\left\{\diam\ \Lambda_{\sf{C}^v_e}\right\},
\end{equation}
Summarizing, we obtain the following.
\begin{lemma}\label{lemdiamki}
Let $v\in \sf{T}_S\cup \sf{T}_R$ be a vertex of $\sf{T}$. Then for any $\delta> 0$, there exists a finite set of edges $e_1,\ldots,e_n\in \sf{E}_v$ such that $\diam\ K_i(v)\leq\delta$ for all $i=1,\ldots,m$.
\end{lemma}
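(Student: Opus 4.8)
Lemma \ref{lemdiamki} just combines the preceding ingredients, so the plan is essentially to assemble them in the right order and to track the dependence of all constants on the vertex $v$ and the scale $\delta$.

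The plan is to start from Proposition \ref{smallpartition}, applied to the tree $\sf{Z}$ on which $G_v$ acts with finite stabilizers (obtained from \cite{D,S}), and to the decomposition space $\mathrm{D}_v$ of $\Lambda_v$. Here I must first note that the hypotheses of that proposition are met: $\mathrm{D}_v$ is compact Hausdorff by \emph{Claim 1}, and it is connected — either a circle (if $v\in \sf{T}_S$) or locally connected without cut points or cut pairs (if $v\in \sf{T}_R$), by \cite{CaMa,H3} and the fact that $\partial\sf{C}$ is connected. So given $\delta>0$, Proposition \ref{smallpartition} yields a finite set of edges $\{e_1,\ldots,e_n\}\subset\sf{E}_v$ (with $n=n(v,\delta)$) and, setting $P_v=\bigcup_{i=1}^n\Lambda_{e_i}$, a partition $\Lambda_v\setminus P_v=\bigcup_{i=1}^m A_i$ into disjoint relatively open and closed saturated sets with $\diam A_i\le\delta'$, where $\delta'$ can be taken as small as we wish (in particular $\le\delta$), and with the crucial additional property that every remaining incident edge $e\in\sf{E}_v\setminus\{e_1,\ldots,e_n\}$ has $\Lambda_e$ contained in a single $A_i$; this last point is what lets us define the index sets $\sf{E}_v(i)$ and then the sets $K_i(v)=A_i\cup\bigcup_{e\in\sf{E}_v(i)}\Lambda_{\sf{C}^v_e}$.

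Next I would invoke Lemma \ref{Leski} to know that the $K_i(v)$ are pairwise disjoint and closed in $\partial\sf{C}\setminus P_v$, giving the decomposition (\ref{decompbord}). The diameter bound then follows from (\ref{diamki}): we have $\diam K_i(v)\le \diam A_i+2\max_{e\in\sf{E}_v(i)}\diam\Lambda_{\sf{C}^v_e}$. The first term is $\le\delta'$ by construction. For the second term, observe that each $e\in\sf{E}_v(i)$ is \emph{not} among the chosen edges $e_1,\ldots,e_n$; in the proof of Proposition \ref{smallpartition} the chosen edges are exactly those with $\diam\Lambda_{\sf{C}_e}\ge\eta$ (where $K\eta\le\delta'$), so for $e\in\sf{E}_v(i)$ we get $\diam\Lambda_{\sf{C}^v_e}=\diam\Lambda_{\sf{C}_e}<\eta\le\delta'/K$. (One uses here, as in the proof of the proposition, that $v_0$ may be taken inside the relevant finite subtree so that $\Lambda_{\sf{C}^v_e}=\Lambda_{\sf{C}_e}$ for these edges.) Combining, $\diam K_i(v)\le\delta'(1+2/K)$, and choosing $\delta'$ (equivalently $\eta$) small enough at the outset we get $\diam K_i(v)\le\delta$ for all $i$, as required. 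The total count $m$ and the number $n$ of excluded edges both depend only on $v$ and $\delta$.

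The only genuinely delicate point is verifying that the proposition's ambient hypotheses hold for $\mathrm{D}_v$ — i.e. that $\mathrm{D}_v$ is Hausdorff and connected and, in the rigid case, has no cut points or cut pairs — and that the two presentations of $\mathrm{D}_v$ (identify points in a common $g\cdot\Lambda_{f_j}$; or identify points in a common $\Lambda_{\sf{C}^v_e}$) agree, so that the separating sets produced by the proposition really are projections of finite unions of edge limit sets $\Lambda_e$, $e\in\sf{E}_v$. Everything else is bookkeeping of the uniform constants coming from Lemma \ref{estimsommetfini} and (\ref{estidiamze}). I expect the proof in the text to be correspondingly short, essentially a restatement-and-assembly of Proposition \ref{smallpartition}, Lemma \ref{Leski}, and the inequality (\ref{diamki}).
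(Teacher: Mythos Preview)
Your overall plan is right, but the step bounding the second term in (\ref{diamki}) has a genuine gap: you conflate edges of the two different trees. The edges selected in the proof of Proposition~\ref{smallpartition} are edges of $\sf{Z}$, the Dunwoody--Stallings tree for the virtually free group $G_v$, and the quantity $\diam\Lambda_{\sf{C}_e}$ appearing there is computed in $\partial G_v=\Lambda_v$. By contrast, the edges $e\in\sf{E}_v(i)$ in the statement of the lemma are edges of the JSJ tree $\sf{T}$ incident to $v$, and $\Lambda_{\sf{C}^v_e}$ is a subset of $\partial\sf{C}$ (the boundary of the whole group $G$). The translation between the two settings goes through the identification of pairs $g\cdot\Lambda_{f_j}$ with limit sets $\Lambda_e$ of incident edges of $\sf{T}$; nothing in Proposition~\ref{smallpartition} tells you which edges of $\sf{E}_v$ end up in $P_v$, let alone that they are ``exactly those with $\diam\Lambda_{\sf{C}_e}\ge\eta$'' for the $\sf{T}$-side quantity. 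So the inequality $\diam\Lambda_{\sf{C}^v_e}<\eta$ for $e\in\sf{E}_v(i)$ is unjustified.

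The paper fixes this by treating the two terms of (\ref{diamki}) independently. First, using (\ref{distzealuno}) from Lemma~\ref{estiqvze} applied to the JSJ tree $\sf{T}$, one finds a finite set $\sf{E}_1\subset\sf{E}_v$ such that every $e\in\sf{E}_v\setminus\sf{E}_1$ has $\diam\Lambda_{\sf{C}^v_e}\le\delta/4$. Then Proposition~\ref{smallpartition}, together with Remark~3.4 (which allows $P_\delta$ to contain any prescribed finite set of classes), yields a finite set $\sf{E}_2=\{e_1,\ldots,e_n\}\supset\sf{E}_1$ with $\diam A_i\le\delta/2$. Now every $e\in\sf{E}_v(i)$ lies outside $\sf{E}_2\supset\sf{E}_1$, so (\ref{diamki}) gives $\diam K_i(v)\le \delta/2+2\cdot\delta/4=\delta$. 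The missing ingredient in your argument is precisely this use of Remark~3.4 to absorb the independently obtained set $\sf{E}_1$ into the edges coming from the proposition.
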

\begin{proof}
Let $\delta> 0$. There is a finite subset $\sf{E}_1\subset \sf{E}_v$, such that if $e\in \sf{E}_v\setminus \sf{E}_1$, the diameter of $\Lambda_{\sf{C}^v_e}$ is bounded from above by $\delta/4$. Also by the previous paragraph, there exists a finite subset $\sf{E}_2=\{e_1,\ldots, e_n\}\subset \sf{E}_v$, which we can assume containing $\sf{E}_1$, such that $\diam A_i\leq\delta/2$. Therefore, from (\ref{diamki}), we have $\diam\ K_i(v)\leq\delta$.
\end{proof}

We now finish the proof of the WS property.

\begin{proposition}\label{brgroupes}
Let $G$ be a one-ended hyperbolic group which is not a cocompact virtually Fuchsian group. Suppose that in the JSJ splitting of $G$ all rigid type vertices are virtually free. Then for any $\delta> 0$, there exists a finite set $\{e_1,\ldots, e_N\}$ of edges of $\sf{T}$, such that
$$\max\left\{\diam\ U: U\in\mathcal{U}\right\}\leq\delta,$$
where $\mathcal{U}$ is the set of connected components of $\partial \sf{C}\setminus P$, and $P=\bigcup_{i=1}^N\Lambda_{e_i}$. In particular, $\partial \sf{C}$ satisfies the WS property.
\end{proposition}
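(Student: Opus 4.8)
The plan is to build a finite subtree $\sf{T}_\delta$ of the JSJ tree $\sf{T}$ whose vertex limit sets ``carry'' all the large-scale structure, and then to refine the partition coming from $\sf{T}_\delta$ using Lemma \ref{lemdiamki} at each of its (finitely many) vertices. First I would choose $\eta>0$ (to be fixed later, depending on $\delta$) and let $\sf{E}_{\eta,0}$ be the finite set of edges $e\in\sf{T}_1$ with $\diam\Lambda_{\sf{C}_e}\geq\eta$; this set is finite by the estimate $\diam\Lambda_{\sf{C}_e}\lesssim\exp(-\epsilon\,\mathrm{dist}(1,\sf{C}_e))$ of Lemma \ref{estiqvze} together with $\mathrm{dist}(1,\sf{C}_e)\to\infty$. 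Let $\sf{T}_\eta$ be the convex hull of $\sf{E}_{\eta,0}$ in $\sf{T}$, a finite subtree, which we may assume contains $v_0$; then for each $v\in\sf{T}_\eta$ and each edge $e$ incident to $v$ but not in $\sf{T}_\eta$ we have $\Lambda_{\sf{C}^v_e}=\Lambda_{\sf{C}_e}$ and $\diam\Lambda_{\sf{C}^v_e}\leq\eta$. The vertices of $\sf{T}_\eta$ are of type $\sf{T}_C$, $\sf{T}_S$ or $\sf{T}_R$; under our hypothesis the $\sf{T}_R$ vertices are virtually free, so every non-virtually-cyclic vertex of $\sf{T}_\eta$ is a non-elementary virtually free group to which Lemma \ref{lemdiamki} applies.

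Next I would apply Lemma \ref{lemdiamki} (with the scale $\delta$, or rather a scale $\delta'\leq\delta$ to be adjusted) to each vertex $v$ of $\sf{T}_\eta$ of type $\sf{T}_S\cup\sf{T}_R$: this produces a finite set of edges $e_1^v,\dots,e_{n_v}^v\in\sf{E}_v$ and the decomposition $\Lambda_v\setminus P_v=\bigcup_i A_i^v$ into saturated open-closed pieces, together with the sets $K_i(v)$ of diameter $\leq\delta'$. For the virtually cyclic vertices $v\in\sf{T}_C$ the limit set $\Lambda_v$ is just a pair of points, so no decomposition is needed — we simply keep those two points. I would then set
$$P=\bigcup_{v\in\sf{T}_\eta\cap(\sf{T}_S\cup\sf{T}_R)}P_v\ \cup\ \bigcup_{v\in\sf{T}_\eta\cap\sf{T}_C}\Lambda_v\ \cup\ \bigcup_{e\in\sf{E}_\eta}\Lambda_e,$$
a finite union of edge limit sets, hence $P=\bigcup_{i=1}^N\Lambda_{e_i}$ for suitable edges $e_i$ (each $\Lambda_v$ with $v\in\sf{T}_C$ is itself $\Lambda_e$ for an incident edge, and each $P_v$ is by construction a finite union of such). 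The connected components of $\partial\sf{C}\setminus P$ then come in two kinds: those contained in some $\Lambda_{\sf{C}^v_e}$ with $e$ incident to $v\in\sf{T}_\eta$ but $e\notin\sf{T}_\eta$ (diameter $\leq\eta$ by construction), and those meeting $\Lambda_v$ for some vertex $v\in\sf{T}_\eta$, which are contained in one of the sets $K_i(v)$ (for $v\in\sf{T}_S\cup\sf{T}_R$, diameter $\leq\delta'$) or trivial (for $v\in\sf{T}_C$, since $\Lambda_v\subset P$). The coherence statement in Proposition \ref{smallpartition} — that an edge not in the chosen finite set has $\Lambda_e$ inside a single piece $A_i^v$ — is exactly what guarantees that removing $P$ does not leave a component straddling two of the $K_i(v)$, so the decomposition \eqref{decompbord} glued over all $v\in\sf{T}_\eta$ is a genuine partition of $\partial\sf{C}\setminus P$ into pieces of diameter $\leq\max\{\eta,\delta'\}$.

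Finally, choosing $\eta\leq\delta$ and $\delta'\leq\delta$ gives $\max\{\diam U:U\in\mathcal{U}\}\leq\delta$, and letting $\delta=\delta_n\to 0$ along a sequence yields finite sets $P_n\subset\mathcal{E}$ with $\delta_n\to 0$, i.e. the WS property. The main obstacle I expect is the bookkeeping at the seams between vertices of $\sf{T}_\eta$: one must check that the local partitions at adjacent vertices $v,w$ of $\sf{T}_\eta$ are compatible, which amounts to verifying that if $e$ is the edge of $\sf{T}_\eta$ joining $v$ and $w$ then the piece of the $v$-partition on the $w$-side and the piece of the $w$-partition on the $v$-side both ``see'' $\Lambda_e$, and that $\Lambda_e\subset P$ (which we arranged) prevents any component from crossing $e$; Remark 3.4 (that $P_v$ may be enlarged to contain any prescribed finite set of classes) is what lets us force the edge limit sets $\Lambda_e$, $e\in\sf{E}_\eta$, into the separating set at each relevant vertex. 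Once this compatibility is in place the diameter bound \eqref{diamki}–\eqref{diamki} and Lemma \ref{Leski} finish the argument. Since this is essentially the content already organized in Lemmas \ref{Leski}, \ref{lemdiamki} and equation \eqref{decompbord}, the proof of Proposition \ref{brgroupes} is then a short assembly.
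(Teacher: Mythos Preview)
Your proposal is correct and follows essentially the same approach as the paper's proof: build a finite subtree of $\sf{T}$ using the estimate of Lemma \ref{estiqvze}, apply Lemma \ref{lemdiamki} at each vertex of type $\sf{T}_S\cup\sf{T}_R$ (arranging via Remark 3.4 that the finite set $\sf{F}_v$ contains all edges of the subtree incident to $v$), treat the $\sf{T}_C$ vertices by noting $\Lambda_v=\Lambda_e$ for any incident edge, and assemble the resulting local partitions into the global one (\ref{decompbord}). The paper organizes the assembly as an explicit induction on the subtree via four cases (root, interior $\sf{T}_S\cup\sf{T}_R$ vertex, $\sf{T}_C$ vertex, terminal vertex), whereas you describe it more holistically, but the content is the same; your use of two auxiliary scales $\eta,\delta'$ is harmless since the paper simply takes both equal to $\delta$.
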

\begin{proof}
We chose $v_0$ in the definition $\phi$ to be any vertex in $\sf{T}_S\cup \sf{T}_R$. Let $\delta>0$, and consider the set $\sf{E}_\delta$ of edges $e\in \sf{T}_1$ such that $\diam\ \Lambda_{\sf{C}_e}\geq\delta$. By Lemma \ref{estiqvze}, $\sf{E}_\delta$ is a finite set. Let $\sf{V}_\delta$ be the union of $\{v_0\}$ and the vertices of the edges in $\sf{E}_\delta$. Let $\sf{T}_\delta$ be the convex hull of $\sf{V}_\delta$. Then $\sf{T}_\delta$ is a finite subtree of $\sf{T}$. We can suppose that for every vertex $w\in \sf{T}_C\cap \sf{T}_\delta$, all the edges incident to $w$ are in $\sf{T}_\delta$, because the valence of $w$ in $\sf{T}$ is finite. In this way, all the terminal vertices of $\sf{T}_\delta $ are in $\sf{T}_S\cup \sf{T}_R$. We will prove that it is enough to take $P$ to be the union of the limit sets of a finite collection of edges belonging to a slightly bigger set than $\sf{T}_\delta$. Let $v$ be a vertex in $\sf{T}_\delta$. We need to consider four cases: 
\medskip

\noindent $\bullet$ \emph{Case 1:} Suppose $v=v_0$. By Lemma \ref{lemdiamki}, there is 
$$\sf{F}_{v_0}:=\{e_1(v_0),\ldots, e_{n_{v_0}}(v_0)\}\subset \sf{E}_{v_0},$$
such that
\begin{equation}\label{delta1}
\diam\ K_i\left(v_0\right)\leq\delta,\text{ for all } i=1,\ldots, m_{v_0}.
\end{equation}
We can assume that the edges of $\sf{T}_\delta$ incident to $v_0$ belong to $\sf{F}_{v_0}$. Denote by $P_{v_0}$ the union of $\Lambda_{e_i(v_0)}$ for $i=1,\ldots, n_{v_0}$. Therefore, $\partial \sf{C}$ can be partitioned as in (\ref{decompbord}).
\medskip

\noindent $\bullet$ \emph{Case 2:} Suppose $v\neq v_0$ is any non-terminal vertex in $(\sf{T}_S\cup \sf{T}_R)\cap \sf{T}_\delta$. There is exactly one edge $e^*$ in $\sf{E}_v$ such that $\sf{T}^v_{e^*}=\sf{T}^-_{e^*}$. It necessarily belongs to $\sf{T}_\delta$ because $v_0\in \sf{T}_\delta$. For the others, we have $\sf{T}^v_e=\sf{T}_e$. By Lemma \ref{lemdiamki}, there are edges 
$$\sf{F}_v:=\{e_1(v),\ldots, e_{n_v}(v)\}\subset \sf{E}_v,$$
such that
\begin{equation}\label{delta1}
\diam\ K_i\left(v\right)\leq\delta,\text{ for all } i=1,\ldots, m_v.
\end{equation}
We can assume that the edges of $\sf{T}_\delta$ incident to $v$ belong to $\sf{F}_v$. Denote by $P_v$ the union of $\Lambda_{e_i(v)}$ for $i=1,\ldots, n_v$. Therefore, $\Lambda_{\sf{C_{e^*}}}$ is partitioned as
\begin{equation}\label{estar}
\Lambda_{\sf{C}_{e^*}}=\bigcup_{\sf{F}_v\setminus\{e^*\}}\left(\Lambda_{\sf{C}_{e_j(v)}}\setminus P_v\right)\cup\bigcup_{i=1}^{m_v}K_i(v)\cup P_v.
\end{equation}

\noindent $\bullet$ \emph{Case 3:} Suppose that $v\in \sf{T}_C\cap \sf{T}_\delta$. Then all the edges incident to $v$ are in $\sf{T}_\delta$. Let $e^*$ be the edge incident to $v$ such that $\sf{T}^v_{e^*}=\sf{T}^-_{e^*}$. If we write the other edges incident to $v$ as $\sf{E}^*_v=\{e_i=(v,v_i): i=1,\ldots, n_v\}$, then
\begin{equation}\label{pasoind}
\Lambda_{\sf{C}_{e^*}}=\bigcup_{i=1}^{n_v}\Lambda_{\sf{C}_{e_i}}=\bigcup_{i=1}^{n_v}\Lambda_{\sf{C}^v_{e_i}}.
\end{equation}
The intersection of two sets of this decomposition is equal to $\Lambda_v=\Lambda_{e^*}=\Lambda_{e_i}$, for $i=1,\ldots, n_v$. This allows us to replace $\Lambda_{\sf{C}_{e^*}}$ by the union of $\Lambda_{\sf{C}_{e_i}}$, where the $v_i$ all belong to $(\sf{T}_S\cup \sf{T}_R)\cap \sf{T}_\delta$.
\medskip

\noindent $\bullet$ \emph{Case 4:} Suppose that $v$ is a terminal vertex of $\sf{T}_\delta$. Let as before $e^*$ be the edge incident to $v$ such that $\sf{T}^v_{e^*}=\sf{T}^-_{e^*}$, which in this case is the unique edge of $\sf{T}_\delta$ incident to $v$. Then $\sf{E}^*_v\subset  \sf{T}_1\setminus T_\delta$, and by construction for any $e\in E^*_v$, we have $\diam\ \Lambda_{\sf{C}^v_e}=\diam\ \Lambda_{\sf{C}_e}\leq\delta$. Then the set $\Lambda_{\sf{C}_{e^*}}$ can be decomposed as in (\ref{estar}), and all the sets in the first and second union are of diameter less than or equal to $\delta$.

Starting from $v_0$, a repetitive combination of the partitions obtained in Cases 1 to 4 give us the set $P$. Indeed, we obtain a partition $\partial \sf{C}=A\cup B \cup P$, where
\begin{equation}\label{decompbr}
A:=\bigcup\limits_{v\in (\sf{T}_S\cup \sf{T}_R)\cap \sf{T}_\delta}\bigcup_{i=1}^{m_v} K_i(v),\ B:=\bigcup_{v\in\mathrm{ter}(\sf{T}_\delta)}\bigcup\limits_{e\in \sf{E}^*_v}(\Lambda_{\sf{C}_e}\setminus P_{v}),
\end{equation}
where $\mathrm{ter}(\sf{T}_\delta)$ denote the set of terminal vertices of $\sf{T}_\delta$, and
\begin{equation}\label{P}
P:=\bigcup\limits_{v\in (\sf{T}_S\cup \sf{T}_R)\cap T_\delta}P_v\end{equation}
The sets in the decomposition (\ref{decompbr}) defining $A$ and $B$ are pairwise disjoint and closed in $\partial \sf{C}\setminus P$, and of diameter less than or equal to $\delta$. This implies that any connected component of $\partial \sf{C}\setminus P$ is contained in one of these sets. This ends the proof.
\end{proof}

\subsubsection{WS is not satisfied when a non-virtually free rigid type vertex exists} Suppose there exists $v\in \sf{T}_R$ such that $G_v$ is not virtually free. First, decompose $G_v$ over finite groups, i.e. the DS splitting of $G_v$. Note that points in different connected components of $\Lambda_v$ can be separated by removing a finite number of local cut points of $\partial \sf{C}$, see Remark 3.3. Consider a vertex $w$ of the DS splitting of $G_v$ whose stabilizer $G_w$ is one-ended. We will show that $\Lambda_w$ cannot be separated by a finite set of local cut points of $\partial \sf{C}$.

Suppose that $g$ is a loxodromic element of $G_w$, and denote by $\Lambda_g=\{g^+,g^-\}$ its fixed points. The limit set $\Lambda_w$ is locally connected and without \emph{global} cut points, so $\Lambda_w\setminus \Lambda_g$ has a finite number of connected components; which we denote by $\Omega_i,\ i=1,\ldots,n$, $n\geq 1$. Note that since the infinite cyclic group $\langle g\rangle$ permutes the components $\{\Omega_i\}$, up to taking a big enough power of $g$, we can suppose that each $\Omega_i$ is fixed by $g$. The set $\Lambda_w\setminus \Lambda_g$ is contained in a connected component $\Omega$ of $\partial \sf{C}\setminus\Lambda_g$, because $v$ is a rigid vertex of $G$. We suppose that $n\geq 2$. For $i\in\{1,\ldots,n-1\}$, let $\alpha_{i}:[0,1]\to \Omega$ be a curve with $\alpha_i(0)\in\Omega_i$ and $\alpha_i(1)\in \Omega_{i+1}$. For each $i$, we connect $\alpha_i(1)$ with $\alpha_{i+1}(0)$ by a curve in $\Omega_{i+1}$, in order to obtain a curve $\alpha:[0,1]\to\Omega$ which intersects each $\Omega_i$. Moreover, given any $\delta>0$, there exists $k\geq 0$ such that the curves $\alpha^+_g:=g^k\cdot \alpha$ and $\alpha^-_g:=g^{-k}\cdot \alpha$ are respectively contained in the balls $B(g^+,\delta)\setminus \{g^+\}$ and $B(g^-,\delta)\setminus\{g^-\}$. In summary, this shows that all the components $\{\Omega_i\}_{i=1}^n$ of $\Lambda_w\setminus \Lambda_g$ can be connected by two curves of $\partial \sf{C}$, which are contained in arbitrarily small punctured balls centered at the fixed points of $g$. We need to consider two cases:
\medskip

\noindent $\bullet$ \emph{Case 1}: Suppose first that $\Lambda_w$ is not homeomorphic to the circle. Consider the JSJ decomposition of $G_w$ with associated tree $\sf{T}_w$. If it is trivial, we are done because there are no local cut points in $\Lambda_w$. If not, it is enough to consider the local cut points of $\Lambda_w$ which are in the limit set of an edge $e$ of $\sf{T}_w$, see (\ref{compcantor}).  Note that $\Lambda_e=\{g^+,g^-\}$ is the set of fixed points of a loxodromic $g$.

Let $\{e_1,\ldots,e_m\}$ be any finite subset of edges of $\sf{T}_w$. Consider $\delta>0$ small enough so that the balls centered at $P:=\bigcup_{k=1}^m\Lambda_{e_k}$ and of radius $\delta$ are disjoint. Write $\Lambda_{e_k}:=\{g_k^+,g_k^-\}$ and for each $k$ consider the curves $\alpha_k^\pm$ contained in $B(g_k^\pm,\delta)\setminus\{g_k^\pm\}$ as before. Since the balls are disjoint, all the curves $\{\alpha_k^\pm\}_{k=1}^m$ avoid the set $P$. Then the set 
$$(\Lambda_w\setminus P)\cup\bigcup_{k=1}^m (\alpha_k^+\cup\alpha_k^-)$$ is a connected subset of $\partial \sf{C}\setminus P$ containing $\Lambda_w\setminus P$. Since $\diam (\Lambda_w\setminus P)=\diam\Lambda_w$, $\partial \sf{C}$ does not satisfies the WS condition.
\medskip

\noindent $\bullet$ \emph{Case 2}: Suppose now that $\Lambda_w$ is homeomorphic to the circle. Then any pair of distinct points $\{x,y\}$ of $\Lambda_w$ is a cut-pair, and $\Lambda_w\setminus\{x,y\}=\Omega_1\cup\Omega_2.$ Take a curve $\alpha$ connecting $\Omega_1$ and $\Omega_2$ as before. Given $\delta>0$, there exists a loxodromic element of $G_w$ such that $g^+\in B(x,\delta)$ and $g^-\in B(y,\delta)$. Therefore, applying exactly the same argument as in Case 1 we show that $\Lambda_w\setminus P$ is contained in a connected component of $\partial \sf{C}\setminus P$ for any finite subset $P$ of $\Lambda_w$.

\subsection{A simple example with $\dim_{AR}\partial \pi_1(M)<D(M)$}\label{examplegroups}

In this section we give a simple example of a one-ended convex cocompact Kleinian group $G=\pi_1(M)$ for which the conclusion of Corollary \ref{Dvsdimconf} holds; namely, that $\dim_{AR}\partial \pi_1(M)<D(M)$, where $M$ is the hyperbolizable $3$-manifold with boundary whose interior is isometric to $\mathbb{H}^3/G$.

Recall that $M$ is called a generalized book of $I$-bundles if one may find a disjoint collection $A$ of essential annuli in $M$ such that each component $R$ of the manifold obtained by cutting $M$ along $A$ is either a solid torus or homeomorphic to an $I$-bundle such that $\partial R\cap \partial M$ is the associated $\partial I$-bundle (they correspond to the surface type vertices in the JSJ decomposition of $G$). In particular, there is no rigid type vertex in the JSJ decomposition of $G$.

Let $H$ be a handlebody with boundary $S$ a closed surface of genus $g\geq 2$, and let $\Gamma=\{\gamma_1,\ldots,\gamma_n\}$ be a multicurve on $S$; i.e. the elements of $\Gamma$ are disjoint simple closed curves on $S$. Consider a small smooth neighborhood $A$ of $\Gamma$ in $S$ so that $A=\{A_1,\ldots,A_n\}$ is a collection of disjoint annuli in $S$.

Let $T_i$ for $i=1,\ldots,n$ be $n$ copies of the solid torus $D^2\times S^1$. On the boundary $S_i$ of $T_i$ let $A_{T,i}$ be a smooth annulus whose core curve generates the $\pi_1(T_i)$. We glue together each solid torus $T_i$ to $H$ identifying $A_i$ with $A_{T,i}$. Denote by $R$ the $3$-manifold obtained.

For each $i$, let $B_i$ be a collection of $m_i\geq 1$ disjoint smooth annuli in $S_i$ for which each core curve generates $\pi_1(T_i)$. Suppose that $B_i$ is also disjoint from $A_{T,i}$. Denote by $m=m_1+\cdots +m_n$. Let $F$ be an $I$-bundle (over a surface which is not necessarily connected) with vertical boundary consisting of exactly $m$ smooth annuli. Glue $F$ to $R$ along the annuli $B_i$, $i=1,\ldots,n$, and call $M$ the $3$-manifold with boundary obtained. Let $G=\pi_1(M)$. 

Suppose now that $\Gamma$ is chosen so that the boundary of any properly embedded disk or annulus in $H$ intersects at least one of the curves in $\Gamma$. Then $G$ is one-ended, and $\pi_1(H)$ does not split over a cyclic subgroup relative to the subgroups $\{\langle \gamma_i\rangle\}_{i=1}^n$, i.e. $\pi_1(H)$ is a rigid type vertex in the JSJ decomposition of $G$. Indeed, the vertices of the JSJ decomposition of $G$ are given by $\{\langle \gamma_i\rangle\}_{i=1}^n\in \sf{T}_C$, $\{\pi_1(F_k):F_k\text{ connected component of }F\}\in \sf{T}_S$, and $\pi_1(H)\in \sf{T}_R$. Since $\pi_1(H)$ is a free group, we are in the hypotheses of Theorem \ref{TEOBRGROUPES} and $\dim_{AR}\partial G=1$. 

By Thurston's hyperbolization theorem, $G$ is isomorphic to a discrete, convex cocompact subgroup of $\mathrm{Iso}(\mathbb{H}^3)$. See \cite{KaK} for a similar construction and for more details. On the other hand, since the JSJ splitting of $G$ has a rigid type vertex, $M$ is not a generalized book of $I$-bundles, and therefore, by \cite[Thm 2.9]{CMT}, we have $D(M)>1$. 

\end{document}